\pgfplotsset{compat=1.17}
\newcommand{\red}[1]{\textcolor{black}{#1}} 
\newcommand{\cyan}[1]{\textcolor{black}{#1}}
\newtheorem{definition}{Definition}
\crefname{definition}{definition}{definitions}
\Crefname{definition}{Definition}{Definitions}
\newaliascnt{theorem}{definition}
\newtheorem{theorem}[theorem]{Theorem}
\crefname{theorem}{theorem}{theorems}
\Crefname{theorem}{Theorem}{Theorems}
\newaliascnt{proposition}{definition}
\crefname{proposition}{proposition}{propositions}
\Crefname{proposition}{Proposition}{Propositions}
\newaliascnt{lemma}{definition}
\newtheorem{lemma}[lemma]{Lemma}
\crefname{lemma}{lemma}{lemmas}
\Crefname{lemma}{Lemma}{Lemmas}
\newaliascnt{corollary}{definition}
\newtheorem{corollary}[corollary]{Corollary}
\crefname{corollary}{corollary}{corollaries}
\Crefname{corollary}{Corollary}{Corollaries}
\newaliascnt{remark}{definition}
\newtheorem{remark}[remark]{Remark}
\crefname{remark}{remark}{remarks}
\Crefname{remark}{Remark}{Remarks}
\newaliascnt{assumption}{definition}
\crefname{assumption}{assumption}{assumptions}
\Crefname{assumption}{Assumption}{Assumptions}
\newaliascnt{example}{definition}
\newtheorem{example}[example]{Example}
\crefname{example}{example}{examples}
\Crefname{example}{Example}{Examples}
\newcommand{\todoNote}[1]%
{\todo[color=yellow!25]{#1}}
\renewcommand{\ip}[2]{\left\langle #1, #2\right\rangle }
\newcommand{\kry}[0]{\mathcal{K}}
\newcommand{\nonNegativeReals}[0]{\mathbb{R}_+}
\newcommand{\resdecrease}[0]{\nonNegativeReals^n}
\title{Any nonincreasing convergence curves are simultaneously possible for GMRES and weighted GMRES, as well as for left and right preconditioned GMRES
\footnote{This work was supported in part by the ANR project DARK (research grant ANR-24-CE46-1633).}}
\author{P. Matalon\thanks{CNRS, CMAP, École polytechnique, Institut Polytechnique de Paris}, N. Spillane\footnotemark[2]}
\date{}
\begin{document}

\maketitle

\begin{abstract}
The convergence of the GMRES linear solver is notoriously hard to predict. A particularly enlightening result by [Greenbaum, Pták, Strakoš, 1996] is that, given any \cyan{non-increasing}{} convergence curve, one can build a linear system for which \cyan{the residual norms produced by}{} GMRES realize that convergence curve. What is even more extraordinary is that the eigenvalues of the problem matrix can be chosen arbitrarily. We build upon this idea to derive novel results about weighted \cyan{and preconditioned}{} GMRES.
We prove that for any linear system and any prescribed convergence curve, there exists a weight matrix $M$ for which weighted GMRES (\emph{i.e.}\ GMRES in the inner product induced by $M$) realizes that convergence curve, and we characterize the form of $M$.
Additionally, we exhibit a necessary and sufficient condition on $M$ for the simultaneous prescription of two convergence curves, one realized by GMRES in the Euclidean inner product, and the other in the inner product induced by $M$.
These results are then applied to infer some properties of preconditioned GMRES when the preconditioner is applied either on the left or on the right. For instance, we show that any two convergence curves are simultaneously possible for left and right preconditioned GMRES.
\end{abstract}

\small\textbf{Keywords: }Krylov subspace method, GMRES, weighted GMRES, left preconditioning, right preconditioning, weighted inner product, convergence analysis.\newline

\small\textbf{Mathematics Subject Classification: }65F10, 65F08, 65F35.

\small\textbf{Communicated by Francoise Tisseur}

\tableofcontents

\section*{Introduction}

The GMRES algorithm \cite{saad_gmres_1986,zbMATH01953444} is one of the most popular methods for solving general non-Hermitian linear systems.
Yet, to this date, its convergence behaviour is not fully understood.
\cyan{While classical quantities such as the spectrum, the pseudo-spectrum, or the numerical range can provide insight in many situations, they do not offer a complete explanation of the observed residual decay. 
In particular, when standard sufficient conditions fail (for instance when the numerical range contains the origin), little can be concluded a priori about convergence.}

A striking change of perspective was introduced by Greenbaum, Pták and Strakoš in \cite{greenbaum_any_1996}. 
Rather than attempting to deduce convergence from properties of a fixed matrix, they considered the inverse question: given a prescribed (non-increasing) residual convergence curve, does there exist a linear system whose GMRES residuals realize it? Their fundamental result shows that any admissible convergence curve can be realized by GMRES. 
Even more remarkably, this can be achieved while also prescribing the spectrum of the matrix arbitrarily, demonstrating that eigenvalues alone do not determine GMRES convergence.

\cyan{In practice, GMRES is frequently modified either by changing the inner product (a method called weighted GMRES) or by introducing preconditioning. The purpose of this work is to analyze the convergence curves that can be achieved simultaneously either by weighted and unweighted GMRES, or by left and right preconditioned GMRES. It is shown that any two convergence curves can be simultaneously achieved as long as the exact solution is reached at the same iteration (\Cref{cor:weight-simultaneous} and \Cref{th:left_right_prec}). This indicates that modifying the inner product or the position of the preconditioner can have a drastic impact on convergence. In \Cref{th:inverse_curves}, a linear system $Ax = b$ and a preconditioner $H$ are considered. It is proved that, if right preconditioned GMRES produces residuals $\mathtt{r}_i := \| b - A x_i \|$ and left preconditioned GMRES produces residual $\widetilde{\mathtt{r}}_i := \|H(b - A x_i)\|$, there exists another linear system $\widetilde A \widetilde x = \widetilde b$ and preconditioner $\widetilde H$ such that the convergence curves are reversed. What is meant by reversed is that right preconditioning  $\widetilde A \widetilde x = \widetilde b$ by $\widetilde H$  produces the residual curve $\| \widetilde b - \widetilde A \widetilde x_i \| = \widetilde{\mathtt{r}}_i $ while left preconditioning produces the residual curve $\|\widetilde H(\widetilde b - \widetilde A \widetilde x_i ) \| = \mathtt r_i $.  } 

\cyan{The possibility for the efficiency of a preconditioner to depend on whether it is applied on the left or on the right has already been raised as early as in \cite{zbMATH03120441} (a main result of which can be found in \cite[Th 9.2]{zbMATH07687320}) and in \cite[p. 308]{zbMATH01311339}. See also the literature review in \cite{spillane2026preconditionleftright}.} A key observation is that weighted GMRES provides a unifying framework for all preconditioned variants. 
This suggests extending the prescribed convergence curve analysis to the weighted setting first, and then deduce consequences for preconditioned GMRES. 

The viewpoint in \cite{greenbaum_any_1996} has already been refined and extended in several directions. 
Arioli, Pták and Strakoš \cite{arioli_krylov_1998} provided a complete parametrization of all systems generating a prescribed convergence curve (the APS parametrization), thereby making explicit the structure of the admissible pairs $(A,b)$. 
Meurant \cite{meurant_gmres_2012} further analyzed this parametrization and clarified its implications. 
In parallel, it was shown in \cite{tebbens_any_2012} that not only the spectrum but also any prescribed set of harmonic Ritz values (\emph{i.e.}\ the zeros of the GMRES residual polynomial) can be realized for a given convergence curve at any iteration, a result further developed in \cite{duintjer_tebbens_prescribing_2014,du_any_2017}. 
Extensions to restarted and block variants of GMRES were established in \cite{vecharynski_any_2011} and \cite{zbMATH07206100}, respectively. An analysis closely related to our work can be found in \cite{MeurantError}, where the residual and error convergence curves are prescribed simultaneously. Even though the norm of the error can also be viewed as a weighted norm of the residual, this setup is particular because the weight varies with the operator.

Understanding the convergence of weighted GMRES, an interesting question in itself, can also be viewed as a step toward understanding the convergence of standard GMRES. 
In recent work \cite{embree2025extendingelmansboundgmres}, Embree has extended the range of applicability of Elman's convergence bound \cite{zbMATH03831185} by considering weighted GMRES. 
Indeed, as long as the eigenvalues are on one side of the origin, \cite{embree2025extendingelmansboundgmres} proposes the construction of an inner product for which the field of values is also on one side of the origin, and hence the Elman bound applies. We view this as a very promising way of predicting the convergence of GMRES for a large class of problems, and we aim to provide some level of insight into the connection between the convergence of GMRES in the Euclidean norm and in the norm used for analysis.

Weighted GMRES is a term coined by Essai \cite{essai_weighted_1998}, who demonstrated that the convergence of restarted GMRES can be accelerated by incorporating a diagonal weight matrix into the inner product of the Arnoldi process at each restart. 
Receiving attention, Essai's technique is analyzed from the angle of the Ritz values \cite{guttel_observations_2014} and the eigenvectors \cite{embree_weighted_2017}, providing insights into use cases where this weighted method can be particularly beneficial.
The abstract algorithm of GMRES using a non-standard inner product has been explored in \cite{pestana_nonstandard_2011,pestana_choice_2013} and references therein.
We also refer to \cite{wathen_preconditioning_2007} for a reflection about which norm should be considered to assess convergence. Earlier works on what is now called weighted GMRES include \cite{zbMATH01096035,zbMATH01271905,zbMATH01201042,zbMATH05626642,zbMATH00036024}.

\paragraph{Outline}
This manuscript starts with a preliminary section where we introduce GMRES and recall the original result from \cite{greenbaum_any_1996} about the prescription of convergence curves. The formalism for prescribed convergence curves is introduced, and in particular the notation $ g_i = \sqrt{\|r_{i-1}\|^2 - \|r_i\|^2}$ for the decrease in norm between two subsequent residuals (\cyan{as in \cite{greenbaum_any_1996}}). Prescribing a non-increasing convergence curve is the same as prescribing the quantities $g_i$ stored in a vector $g \in \mathbb R^n$. In \Cref{sec:weighted_gmres}, we present novel theoretical results about weighted GMRES and how it compares to GMRES in the Euclidean norm. Stemming from the fact that there are equivalences between weighted GMRES and preconditioned GMRES (under the right choices of linear systems, weights and preconditioners), we deduce in \Cref{sec:preconditioning} some new results about prescribed convergence curves for left and right preconditioned GMRES. To complement our theoretical results, \Cref{sec:illustrations} provides numerical illustrations exploring the behavior of weighted and preconditioned GMRES in various circumstances.
The MATLAB code reproducing the numerical experiments is openly available\footnote{\url{https://github.com/hpc-maths/2025_prescribed_gmres}}. 
Moreover, as all the proofs are constructive, it also includes the implementation of examples that verify the theorems.
Besides, we make available our MATLAB implementation of GMRES\footnote{\url{https://github.com/hpc-maths/krylov4r}}, which accepts left/right preconditioners and change of inner product.

\paragraph{Flavour of our work}

We present four results (in a simplified setting) that give a flavour of our work on weighted GMRES and on preconditioned GMRES. 

\paragraph{\cyan{Simultaneous prescription of convergence curves for weighted and unweighted GMRES.}}

The first result {(\Cref{th:there_exists_M})}{} is that, for any given system $A x = b$ and prescribed convergence curve, there exists a Hermitian positive definite weight matrix $M$ such that weighted GMRES applied to $Ax=b$ realizes the prescribed convergence curve.
The second result {(\Cref{th:two_cc_characterization})}{} considers two prescribed convergence curves (\textit{i.e.} sequences of residual norm values) \cyan{$(\mathtt{r}_i)_i$ and $(\widetilde{\mathtt{r}}_i)_i$}:
\[
 \mathtt{r}_0 > \mathtt{r}_1 > \mathtt{r}_2 >  \dots \text{ for GMRES, and }
 \widetilde{\mathtt{r}}_0 > \widetilde{\mathtt{r}}_1 > \widetilde{\mathtt{r}}_2 >  \dots  \text{ for weighted GMRES.}
\]
We prove that, for a given Hermitian positive definite weight matrix $M$, there exists a system $Ax=b$ such that both convergence curves are realized
\underline{if and only if} there exists a non-singular upper triangular matrix $T$  such that the singular values of $T$ are the inverse square roots of the eigenvalues of $M$, and
\[
\begin{pmatrix} \sqrt{\mathtt{r}_0^2 - \mathtt{r}_1^2}, & \sqrt{\mathtt{r}_1^2 - \mathtt{r}_2^2}, &  \dots\end{pmatrix}^\top = T\, \begin{pmatrix} \sqrt{{\widetilde{\mathtt{r}}_0}^2 - {\widetilde{\mathtt{r}}_1}^2},& \sqrt{{\widetilde{\mathtt{r}}_1}^2 - {\widetilde{\mathtt{r}}_2}^2},& \dots\end{pmatrix}^\top.
\]
Additionally, the eigenvalues of $A$ can be prescribed.

\paragraph{\cyan{Simultaneous prescription of convergence curves for left and right preconditioned GMRES.}}

Consider two prescribed convergence curves {$(\mathtt{r}_i)_i$ and $(\widetilde{\mathtt{r}}_i)_i$}:
\begin{itemize}
\item $\mathtt{r}_0 > \mathtt{r}_1 > \mathtt{r}_2 >  \dots $ for right preconditioned GMRES,
\item $\widetilde{\mathtt{r}}_0 > \widetilde{\mathtt{r}}_1 > \widetilde{\mathtt{r}}_2 >  \dots $  for left preconditioned GMRES.
\end{itemize}
The first result {(\Cref{th:left_right_prec})}{} is that there exists a system $A x = b$ and a preconditioner $H$ such that both convergence curves are realized. 
Additionally, the eigenvalues of $AH$ \cyan{(or $HA$, which are the same since both matrices are similar)}{} can be prescribed. 
The second result {(\Cref{th:left_right_two_cc_characterization})}{} is that, for a given preconditioner $H$, there exists a system $Ax=b$ such that both convergence curves are realized 
\underline{if and only if} there exists a non-singular upper triangular matrix $T$  such that the singular values of $T$ are the inverse of those of $H$ and
\[
\begin{pmatrix} \sqrt{\mathtt{r}_0^2 - \mathtt{r}_1^2}, & \sqrt{\mathtt{r}_1^2 - \mathtt{r}_2^2}, &  \dots\end{pmatrix}^\top = T\, \begin{pmatrix} \sqrt{\widetilde{\mathtt{r}}_0^2 - {\widetilde{\mathtt{r}}_1}^2},& \sqrt{\widetilde{\mathtt{r}}_1^2 - \widetilde{\mathtt{r}}_2^2},& \dots\end{pmatrix}^\top.
\]
Additionally, the eigenvalues of $AH$ \cyan{(or, equivalently, of $HA$)}{} can be prescribed.

\paragraph{Notation} We denote by $\nonNegativeReals$ the set of non-negative real values (including 0). Let $n\in\mathbb{N}$.
For any matrix $A\in\mathbb{C}^{n\times n}$, $A^*$ denotes its conjugate transpose and, if invertible, $A^{-1}$ its inverse and $A^{-*}$ the inverse of $A^*$.
We refer to $A$ being ``hpd'' for Hermitian positive definite.

\section{Preliminary on GMRES and convergence curve formalism} \label{sec:preliminary}

\cyan{The objective of this section is to introduce GMRES as well as a framework for convergence analysis by prescribing convergence curves that is greatly inspired by \cite{greenbaum_any_1996}.}

We denote by $(A,b)$ the general linear system 
\begin{equation*}
    Ax = b,
\end{equation*}
where $A\in\mathbb{C}^{n\times n}$ is a non-singular matrix and $b\in\mathbb{C}^n$ is a vector. 
 Given an inner product $\ip{\cdot}{\cdot}$ and its associated norm $\|\cdot\|$, we consider the application of the GMRES algorithm \cite{saad_gmres_1986,zbMATH01953444} with a zero initial guess ($x_0 = 0$) to the system $(A, b)$. We refer to this procedure as GMRES($A,b$). 
 In this section, the inner product $\ip{\cdot}{\cdot}$ remains abstract.
The method is associated with the Krylov subspaces 
\[
\kry_i(A, b) := \operatorname{span} \{b, Ab, A^2b, \dots, A^{i-1}b\} \qquad \forall i=1, \dots, n.
\]
At any iteration $i=0,\dots,\dim \kry_n(A, b)$, the approximate solution is denoted by $x_i$ and the corresponding residual by $r_i := b - A x_i$. By definition of GMRES, each residual is characterized by
    \begin{equation} \label{eq:minim_pb}
r_i \in b +  A\kry_i(A, b) \text{ and }        \|r_i\| = \min_{x\in \kry_i(A, b)} \|b-Ax\| = \min_{y\in A\kry_i(A, b)} \|b-y\|.
    \end{equation}

The Krylov residual spaces \(
    A\kry_i(A, b) = \operatorname{span}\{Ab, A^2b, \dots, A^ib\}
\) 
play a key role in formalizing GMRES. These spaces are nested by definition, and we define the notion of \emph{nested residual basis}:
\begin{definition}[Nested residual basis]
    Let $(A, b)$ define a given linear system, and $m :=\dim \kry_n(A, b)$ be the breakdown index of GMRES$(A,b)$.
    The family $(w_j)_{j=1,\dots,m}$ is called a nested basis of $A\kry_m(A, b)$ if, for all $i=1,\dots,m$, its restriction to the $i$ first vectors is a basis of $A\kry_i(A, b)$.
\end{definition}
\cyan{ The breakdown index of GMRES($A,b$) is also the iteration at which GMRES finds the exact solution \cite[Prop.\ 6.2, Prop.\ 6.10]{zbMATH01953444}. Since it does not depend on the choice of inner product, all weighted versions of GMRES($A,b$) reach the exact solution at the same iteration.}

Another interpretation of \eqref{eq:minim_pb} is that $(b-r_i)$ is the orthogonal projection of $b$ onto $A\kry_i(A, b)$. Consequently, $r_i$ can also be expressed as 
$r_i = b - \sum_{j=1}^i \ip{b}{w_j} w_j$,  
in which $(w_j)_{j=1,\dots,m}$ is a nested, orthonormal basis of $A\kry_n(A, b)$. 
 Since $r_m = 0$, this implies that $ b = \sum_{j=1}^m \ip{b}{w_j} w_j$, $r_i =  \sum_{j=i+1}^m \ip{b}{w_j} w_j$, 
    and that
\begin{equation}
\label{eq:ip_b_wj}
\abs{\ip{b}{w_j}} = \sqrt{\|r_{j-1}\|^2 - \|r_j\|^2}.
\end{equation}
\cyan{It turns out that the convergence of GMRES can be conveniently captured  by these differences (as in \cite{greenbaum_any_1996}).} 

\begin{definition}[Residual decrease vector]
\label{def:resdecvec}
Let $g \in \resdecrease$. We say that $g$ is the residual decrease vector of GMRES$(A,b)$ if  the GMRES residuals satisfy
\[
g_j = \sqrt{\|r_{j-1}\|^2 - \|r_j\|^2}, \quad j = 1,\dots, \dim \kry_n(A, b),
\]
and all remaining entries of $g$ are zero.   
\end{definition} 

Many of the new results in this article are written in the residual decrease vector formalism. Next, we make explicit the meaning of prescribing a GMRES convergence curve through a residual decrease vector. 

\begin{definition}[Length of $g$]
Let $g \in \resdecrease$. If it exists, let $p$ be the smallest index such that $g_i = 0$ for all $i>p$. Otherwise, set $p=n$. In both cases, we say that $p$ is the \emph{length} of $g$.
\end{definition}
\cyan{
It can be noticed that, if a vector $g \in \resdecrease$ is viewed as the residual decrease vector of GMRES$(A,b)$, then its length is equal to $ \dim \kry_n(A, b)$, the breakdown index of GMRES.
\begin{definition}[Realization of a convergence curve]
\label{def:PCCinduced}
Let $g \in \resdecrease$ and let $p$ be its length. 
Let $(A,b)$ define a linear system. 
We say that GMRES$(A,b)$ realizes the convergence curve induced by $g$ if, denoting by $(r_i)_i$ the residuals produced,
\[
p = \dim \kry_n(A, b) \qquad\text{ and }\qquad g_j = \sqrt{\|r_{j-1}\|^2 - \|r_j\|^2} \quad \forall j = 1,\dots, p, 
\]
or, equivalently, if
\begin{equation} \label{eq:r_from_g}
\|r_i \| = \sqrt{\sum_{j=i+1}^n g_j^2} \qquad \forall i=0,\dots,n.  
\end{equation}
\end{definition}
}

It can be observed from \eqref{eq:r_from_g} at $i=0$ that $\| b \|$ is entirely defined by $g$. \cyan{Another key observation is that prescribing residuals through \eqref{eq:r_from_g} restricts us to non-increasing convergence curves. This is a difference with \cite{MeurantError} where error curves are prescribed that are not necessarily monotonic.} For future reference, we recall in \Cref{th:existenceAb} the fundamental result of \cite[Th.\ 2.1]{greenbaum_any_1996}. We do not recall the proof which is constructive and allows to explicitly define $(A,b)$. 

\begin{theorem}[System with prescribed convergence curve and spectrum \cite{greenbaum_any_1996}]
     \label{th:existenceAb}
    Consider $g\in\resdecrease$ a residual decrease vector of arbitrary length $m$, and the prescribed, non-vanishing, complex numbers $(\lambda_i)_{i=1,\dots,n}$.\newline
    There exists a system $(A,b)$ such that the eigenvalues of $A$ are the $(\lambda_i)_{i=1,\dots,n}$ and GMRES$(A,b)$ realizes the convergence curve induced by $g$.
\end{theorem}
While the original theorem is restricted to the case where no breakdown occurs before the last iteration, early termination of GMRES is considered in \cite[Th.\ 3]{duintjer_tebbens_prescribing_2014}.  We next isolate a necessary and sufficient condition for GMRES$(A,b)$ to realize a given convergence curve. But first, we introduce two useful results about nested residual basis that are orthonormal. 

\begin{lemma}
\label{th:unicityONB}
    Let $(A, b)$ define a given linear system, and $m:=\dim \kry_n(A, b)$.
\begin{enumerate}[label=(\roman*)]
\item All nested orthonormal bases of $A\kry_m(A, b)$ are equal, up to the multiplication of each basis vector by a scalar of modulus 1.
\item There exists a nested orthonormal basis $(w_i)_{i=1,\dots,m}$  of $A\kry_m(A, b)$ such that $\ip{b}{w_i} \in \nonNegativeReals$ for all $i=1,\dots,m$.
\end{enumerate}
\end{lemma}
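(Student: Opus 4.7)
For part (i), my plan is an induction on the index $i$. The key fact, which follows from the non-singularity of $A$ and the definition of $m$, is that $\dim A\kry_i(A,b) = i$ for $i=1,\dots,m$, so each successive ``new direction'' in the nested flag is one-dimensional. At the base case $i=1$, both $w_1$ and $w'_1$ are unit vectors in the line $A\kry_1(A,b) = \operatorname{span}\{Ab\}$, forcing $w'_1 = \alpha_1 w_1$ with $|\alpha_1|=1$. For the inductive step, I observe that both $w_i$ and $w'_i$ lie in $A\kry_i(A,b)$ and are orthogonal to $A\kry_{i-1}(A,b)$ (since the first $i-1$ vectors of each basis span this subspace by the nested property); hence both belong to the one-dimensional orthogonal complement of $A\kry_{i-1}(A,b)$ inside $A\kry_i(A,b)$ and must therefore differ by a scalar of modulus $1$.

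For part (ii), the plan is to first produce some nested orthonormal basis---which one obtains by applying Gram--Schmidt orthogonalization to the generating family $Ab, A^2b, \dots, A^m b$---and then to rescale each vector by a unimodular scalar. Starting from any such initial basis $(\tilde w_j)_{j=1,\dots,m}$, I write $\ip{b}{\tilde w_i} = \rho_i \, e^{\mathrm{i}\theta_i}$ with $\rho_i \geq 0$, and define $w_i := \alpha_i \tilde w_i$ by choosing $\alpha_i$ of modulus $1$ so that $\ip{b}{w_i} = \rho_i \in \nonNegativeReals$ (the precise choice $\alpha_i = e^{\mathrm{i}\theta_i}$ or its conjugate depends on which slot of $\ip{\cdot}{\cdot}$ is conjugate-linear). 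When $\ip{b}{\tilde w_i} = 0$, any unit scalar works. Part (i) guarantees that this componentwise rescaling yields another nested orthonormal basis, and by construction the sign condition holds at every index.

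The main obstacle, if any, is essentially a bookkeeping one rather than a conceptual one: both parts reduce to elementary linear algebra on a chain of one-dimensional orthogonal complements. The only mild subtlety is the convention on the sesquilinearity of $\ip{\cdot}{\cdot}$---which only affects whether one conjugates the phase in the rescaling step of (ii)---and the degenerate case $\ip{b}{\tilde w_i} = 0$, which is handled trivially.
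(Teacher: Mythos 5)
Your proposal is correct and follows essentially the same route as the paper: part (i) is the induction the paper declares ``straightforward'' (you simply spell out the one-dimensional orthogonal-complement argument), and part (ii) is exactly the paper's construction of rescaling a Gram--Schmidt basis by unimodular scalars $\gamma_i$ so that $\ip{b}{w_i}$ becomes $\abs{\ip{b}{w_i}} \in \nonNegativeReals$, with the zero case handled trivially.
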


\begin{proof}
The first item is proved by induction over $m$. For the second, consider any nested orthonormal basis $(\widetilde{w}_i)_{i=1,\dots,m}$ of $A\kry_m(A, b)$ and set  $w_i := \gamma_i \widetilde{w}_i$ with $\gamma_i := \abs{\ip{b}{\widetilde{w}_i}}/\ip{b}{\widetilde{w}_i}$ if $\ip{b}{\widetilde{w}_i}\not=0$ and $\gamma_i :=1$ otherwise.
\end{proof}

\begin{lemma}[Necessary and sufficient condition for prescribing a convergence curve] \label{lem:characterization} 
    Let $(A, b)$ define a given linear system, and $g\in\resdecrease$ be a residual decrease vector of length $m:=\dim \kry_n(A, b)$.
    GMRES$(A,b)$ realizes the convergence curve induced by $g$ if and only if there exists  a nested, orthonormal basis of $A\kry_m(A, b)$, denoted by $(w_i)_{i=1,\dots,m}$, such that
    \begin{equation} \label{eq:b}
        b = \sum_{i=1}^m g_i w_i.
    \end{equation}
\end{lemma}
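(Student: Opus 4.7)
The plan is to derive both implications directly from \Cref{th:accum_succ_res}, which for \emph{any} nested orthonormal basis $(w_j)$ of $A\kry_m(A,b)$ expresses $r_i$ and $b$ in that basis, together with \Cref{th:unicityONB}(ii), which gives existence of such a basis with non-negative coefficients $\ip{b}{w_j}$.

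For the direction ($\Rightarrow$), assume GMRES$(A,b)$ produces residuals with $\|r_i\|=\resnorm{r}_i$ for every $i=0,\dots,m$. Invoke \Cref{th:unicityONB}(ii) to pick a nested orthonormal basis $(w_j)_{j=1,\dots,m}$ of $A\kry_m(A,b)$ satisfying $\ip{b}{w_j}\in\nonNegativeReals$ for all $j$. Applying \Cref{th:accum_succ_res} to this basis yields
\[
b=\sum_{j=1}^m \ip{b}{w_j}\, w_j,\qquad \ip{b}{w_j}^2 = \|r_{j-1}\|^2-\|r_j\|^2 = \resnorm{r}_{j-1}^2-\resnorm{r}_j^2 = g_j^2.
\]
Since both $\ip{b}{w_j}$ and $g_j$ are non-negative, $\ip{b}{w_j}=g_j$, and \eqref{eq:b} follows.

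For the direction ($\Leftarrow$), suppose $(w_j)_{j=1,\dots,m}$ is any nested orthonormal basis of $A\kry_m(A,b)$ for which \eqref{eq:b} holds. Orthonormality immediately gives $\ip{b}{w_j}=g_j$ for $j=1,\dots,m$. Substituting into \eqref{eq:th:accum_succ_res:r_i} from \Cref{th:accum_succ_res} (which is valid for any nested orthonormal basis),
\[
r_i \;=\; b-\sum_{j=1}^{i} g_j\, w_j \;=\; \sum_{j=i+1}^m g_j\, w_j,
\]
so orthonormality and formula \eqref{eq:r_from_g} give $\|r_i\|^2=\sum_{j=i+1}^m g_j^2 = \sum_{j=i+1}^n g_j^2 = \resnorm{r}_i^2$, i.e.\ \eqref{eq:realized_conv_curve} is verified.

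The argument is essentially a repackaging of \Cref{th:unicityONB} and \Cref{th:accum_succ_res}; the only point that requires care is the sign: the basis coming out of Gram–Schmidt does not in general produce $\ip{b}{w_j}=g_j\ge 0$, which is why in the forward direction one must invoke \Cref{th:unicityONB}(ii) to rephase the basis rather than use an arbitrary one. Once this is done, the identification of coefficients is immediate, and the length $m$ of $g$ matches the breakdown index so no indexing inconsistency arises between the sums over $j=i+1,\dots,m$ and over $j=i+1,\dots,n$.
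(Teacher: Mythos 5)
Your proof is correct and follows essentially the same route as the paper's: both directions rest on \Cref{th:accum_succ_res} together with \Cref{th:unicityONB}(ii) to rephase the basis so that $\ip{b}{w_j}\in\nonNegativeReals$, and the identification $\ip{b}{w_j}=g_j$ then gives the equivalence. Your version merely writes out the residual-norm computation in the ($\Leftarrow$) direction more explicitly than the paper does; the substance is identical.
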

\begin{proof}First,  assume that $(w_i)_{i=1,\dots,m}$ is a nested, orthonormal basis of $A\kry_m(A, b)$ verifying \eqref{eq:b}.
   Then, by orthonormality of the $w_i$ and \eqref{eq:ip_b_wj}, the residuals $(r_i)_{i=1,\dots,m}$  of GMRES$(A,b)$ satisfy 
$        g_i = \ip{b}{w_i} =  \sqrt{\|r_{i-1}\|^2 - \|r_i\|^2}$, 
for any $ i=1, \dots, m$. 
    This shows that GMRES$(A,b)$ realizes the convergence curve induced by $g$.

Conversely, assume that GMRES$(A,b)$ realizes the convergence curve induced by $g$, \textit{i.e.},  
     $   g_i = \sqrt{\|r_{i-1}\|^2 - \|r_i\|^2}$ for any $i=1,\dots,m$. \cyan{By \Cref{th:unicityONB}, there exists  $(w_i)_{i=1,\dots,m}$, a nested orthonormal basis of $A\kry_m(A, b)$ such that $\ip{b}{w_i} \in \nonNegativeReals$ for all $i=1,\dots,m$.} Applying \eqref{eq:ip_b_wj}, we get
\begin{equation*}
    \abs{\ip{b}{w_i}} = \ip{b}{w_i} = g_i  \qquad \forall i=1, \dots, m, \qquad\text{ and }\qquad  b = \sum_{i=1}^m g_i w_i.
\end{equation*}
\end{proof}

In \Cref{lem:characterization}, it is assumed that the length $m$ of $g$ equals $\dim \kry_n(A, b)$. This is not strictly necessary because the equality could be recovered from \eqref{eq:b}. However, we prefer to keep the assumption in order to emphasize that the breakdown index is fixed by the linear system.

We end this section with the relationship between two nested residual bases. This plays a fundamental role in subsequent results since we will be considering nested bases of the same space that are orthonormal with respect to different inner products. The following result is similar to \cite[Prop.\ 1]{essai_weighted_1998} and \cite[Lem.\ 3.17]{pestana_nonstandard_2011} with the difference that we choose to consider bases for the whole of $\mathbb C^n$.

\begin{lemma}[Relationship between two nested bases]\label{lem:link_between_bases}
    Let $(A, b)$ define a given linear system, and $m := \dim \kry_n(A, b)$.
    Let $(w_i)_{i=1,\dots,n}$ be a basis of $\mathbb{C}^n$ such that $(w_i)_{i=1,\dots,m}$ is a nested basis of $A\kry_m(A, b)$. Denote by $W$ its column-matrix representation.\newline
 Consider $(\widetilde{w}_i)_{i=1,\dots,n}$ another basis of $\mathbb{C}^n$, with column-matrix $\widetilde{W}$.
    Then, $(\widetilde{w}_i)_{i=1,\dots,m}$ also forms a nested basis of $A\kry_m(A, b)$ if and only if there exists a non-singular matrix $T \in\mathbb{C}^{n\times n}$ of the form
    \begin{equation} \label{eq:T_form}
        T := \left(
            \begin{array}{c|c}
                    \widehat{T} & * \\ 
                    \hline
                    0  & *
            \end{array}
        \right),
    \end{equation}
    such that
    \begin{equation} \label{eq:W}
        \widetilde{W} = WT,
    \end{equation}
    where $\widehat{T}\in\mathbb{C}^{m\times m}$ is upper triangular and invertible, and $*$ denotes possibly non-zero coefficients of no importance to the convergence of GMRES.\newline
\end{lemma}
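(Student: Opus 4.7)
Since both $(w_i)_{i=1,\dots,n}$ and $(\widetilde w_i)_{i=1,\dots,n}$ are bases of $\mathbb C^n$, the matrices $W$ and $\widetilde W$ are non-singular, hence there exists a \emph{unique} non-singular $T\in\mathbb C^{n\times n}$ such that $\widetilde W = WT$, namely $T = W^{-1}\widetilde W$. The whole statement therefore reduces to characterizing the block structure of this unique $T$.

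For the ``only if'' direction, the plan is to read off the structure of $T$ column by column. Assume $(\widetilde w_i)_{i=1,\dots,m}$ is a nested basis of $A\kry_m(A,b)$. Fix $i\le m$. By the nested property,
\[
\widetilde w_i \in A\kry_i(A,b) = \operatorname{span}\{w_1,\dots,w_i\},
\]
since $(w_j)_{j=1,\dots,m}$ is itself a nested basis of $A\kry_m(A,b)$. Expressing $\widetilde w_i = \sum_{j=1}^n T_{j,i}\, w_j$, linear independence of the $w_j$'s forces $T_{j,i} = 0$ for every $j>i$. Letting $i$ range over $\{1,\dots,m\}$ yields that the leading $m\times m$ block $\widehat T$ is upper triangular and the $(n-m)\times m$ block below it is zero, which is exactly the form \eqref{eq:T_form}. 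Non-singularity of $\widehat T$ then follows from the fact that $(\widetilde w_1,\dots,\widetilde w_m)$ is a basis of the $m$-dimensional space $A\kry_m(A,b)$: its coordinates in $(w_1,\dots,w_m)$ form the matrix $\widehat T$, which must therefore be invertible.

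For the ``if'' direction, assume $T$ has the form \eqref{eq:T_form} and is non-singular. For each $i\le m$, the zero block in \eqref{eq:T_form} and the upper-triangularity of $\widehat T$ imply $\widetilde w_i \in \operatorname{span}\{w_1,\dots,w_i\} = A\kry_i(A,b)$. To conclude that $(\widetilde w_1,\dots,\widetilde w_i)$ is a basis of $A\kry_i(A,b)$, it suffices to check linear independence: their coordinate matrix in $(w_1,\dots,w_i)$ is the leading $i\times i$ principal submatrix of $\widehat T$, which is upper triangular; moreover, since $\widehat T$ is non-singular and upper triangular its diagonal entries are all nonzero, hence the leading $i\times i$ submatrix has nonzero diagonal and is invertible. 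This gives the nested property for $(\widetilde w_i)_{i=1,\dots,m}$.

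The argument is essentially bookkeeping on block structures, and the only subtle point is the observation that upper-triangularity plus global non-singularity of $\widehat T$ automatically transfers to every leading principal submatrix, which is what makes the nested property hold at each intermediate index $i$ rather than just at $i=m$.
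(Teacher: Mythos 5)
Your proof is correct and follows essentially the same approach as the paper: expressing each $\widetilde w_i$ in the basis $(w_j)_j$ and using the nestedness of the spaces $A\kry_i(A,b)$ to read off the block-triangular structure of $T=W^{-1}\widetilde W$. You are in fact more complete than the paper, which only sketches the ``only if'' direction and leaves the converse (including the key point that leading principal submatrices of an invertible upper triangular $\widehat T$ are themselves invertible) implicit.
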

Note that if $m=n$ (no breakdown), then, in \eqref{eq:T_form}, $T$ is simply an upper triangular invertible matrix. 

\begin{proof}
The $(1,1)$ and $(2,1)$-blocks of $T$ are justified by the nestedness of the spaces $(A\kry_i(A, b))_{i=1,\dots,m}$. 
The remaining blocks must only ensure $T$'s non-singularity.
\end{proof}

\section{Weighted GMRES} \label{sec:weighted_gmres}

Any Hermitian positive definite (hpd) matrix $M\in\mathbb{C}^{n\times n}$ induces an inner product and a norm, respectively defined by 
\[
\ip{x}{y}_M := y^*Mx \qquad\text{ and }\qquad \|x\|_M := \sqrt{\ip{x}{x}_M} \qquad\text{ for all } x,y\in\mathbb{C}^n.
\]
\cyan{A matrix $X$ is said to have $M$-orthonormal columns when $X^*MX = I$ (the identity matrix). The notation $^*$ invariably denotes the conjugate transpose of a vector or a matrix.} 
 
We call $M$-GMRES the GMRES algorithm resulting from the choice $\ip{\cdot}{\cdot} := \ip{\cdot}{\cdot}_M$. In this formalism, $I$-GMRES refers to the Euclidean setting.
 Since the inner product $\ip{\cdot}{\cdot}$ remained abstract in the preceding section, all results presented before still hold for $M$-GMRES.
\textbf{Following \Cref{def:PCCinduced}, when stating that $M$-GMRES realizes the convergence curve induced by a residual decrease vector $g$, the residuals are measured in the $M$-norm.}

\subsection{$I$-GMRES and $M$-GMRES} \label{sec:ipsen}

Let $M$ be an hpd matrix. Let us begin with some natural relations between the $k$-th $I$-GMRES residual $r_k$ and the $k$-th $M$-GMRES residual $\widetilde r_k$. The initial guesses are still set to $0$, so the only difference between both algorithms is the norm in which the minimization property is enforced. Denoting by $\mu_{\min} (M) $ and $\mu_{\max} (M) $, the extreme eigenvalues of $M$, it follows that
\begin{equation}
\label{eq:IM-GMRES-easy}
\mu_{\min}(M)\|r_k\|_I^2 \leq \mu_{\min}(M)\| \widetilde{r}_k\|_I^2 \leq  \| \widetilde{r}_k \|_M^2  \leq \| r_k\|_M^2 \leq \mu_{\max}(M) \| r_k \|_I^2,
\end{equation}
where the first and third inequalities come from the definition of GMRES, and the others are spectral estimates. In particular, with  $\kappa(M){ = \mu_{\max}(M) / \mu_{\min} (M)}$ the condition number of $M$, we obtain
\begin{equation}\label{eq:IM-GMRES-easy2}
 \| r_k \|_I^2  \leq \| \widetilde{r}_k\|_I^2 \leq \kappa(M)  \| r_k \|_I^2 \text{ and }  \|\widetilde  r_k\|_M^2 \leq \| r_k\|_M^2 \leq  \kappa(M)  \|\widetilde  r_k\|_M^2.
\end{equation}
Additionally, $r_0 = \widetilde{r_0}$
and the normalized residuals satisfy
\begin{equation} \label{eq:link_nomalized_residuals}
\kappa(M)^{-1} \frac{\|\widetilde  r_k\|_M^2}{\|\widetilde{r}_0\|_M^2} \leq \frac{ \| r_k \|_I^2}{\|r_0\|_I^2} \leq \kappa(M)  \frac{\|\widetilde  r_k\|_M^2}{\|\widetilde{r}_0\|_M^2}.
\end{equation}

We next apply the formalism proposed by Ipsen in \cite{zbMATH01533021}[Theorem 2.1 and Corollary 2.2] with some details of the proof from \cite[Lemma 7.1]{ipsen1998different}. What is new here is that we extend (straightforwardly) the formalism to weighted GMRES and apply it to compare either the same residual in two norms, or the residuals of $I$- and $M$-GMRES. 

Consider $M$-GMRES$(A,b)$ starting with $\widetilde r_0 = b$. Up to convergence, the $k$-th residual is determined by 
\[
\widetilde r_k = b - W_k\underbrace{(W_k^* M W_k)^{-1} W_k^* M b }_{:=y_0},
\]
for any matrix $W_k \in \mathbb C^{n\times k}$ whose columns form a basis of $A \kry_k(A,b)$. 
Let $B = (b \,\,\, W_k)$ be the concatenation of the right-hand-side and this basis. This gives a basis of the Krylov subspace $\kry_{k+1}(A,b)$. Moreover, 
\[
B^* M B = \begin{pmatrix}  b^* M b & b^*M W_k \\ W_k^* M b & W_k^* M W_k \end{pmatrix},
\]
and the first row in the inverse of this matrix is \cite[eq. (0.7.3.1)]{zbMATH06125590}: 
\[
e_1^\top (B^* M B )^{-1} = (b^* M \widetilde r_k)^{-1} \begin{pmatrix}1 & -y_0^* \end{pmatrix}, \text{ where } e_1 = (1,0, \dots, 0)^\top \in \mathbb C^{k+1}.
\]
Multiplying on the right by $B^*$ gives
$e_1^\top (B^* M B )^{-1} B^* = (b^* M \widetilde r_k)^{-1} \underbrace{(b^* -y_0^*W_k^* )}_{=\widetilde r_k^*} = \frac{\widetilde r_k^*}{ \|\widetilde r_k\|_M^2 }$, 
where the last equality is because $b^* M \widetilde r_k = \|\widetilde r_k\|_M^2$ by $\widetilde r_k^* M W_k = 0$. Taking the transpose, we get 
\[
\frac{\widetilde r_k}{ \|\widetilde r_k\|_M^2 } = B (B^* M B )^{-1} e_1.
\]
Next, we take the $M$-norm and the $I$-norm of this estimate in order to get a formula for the $k$-th residual norm:
\begin{equation} \label{eq:ipsen}
 \|\widetilde r_k\|_M = \| B (B^* M B )^{-1} e_1\|_M^{-1} = \|e_1\|_{(B^* M B )^{-1}}^{-1},
\end{equation}
as well as for the ratio between the $I$- and $M$-norms of the $k$-th residual
\begin{equation*}
\frac{ \|\widetilde r_k\|_I}{ \|\widetilde r_k\|_M} = \frac{\| B (B^* M B )^{-1} e_1\|_I}{\| B (B^* M B )^{-1} e_1\|_M}.
\end{equation*}
\begin{remark}
    The identity $\|e_1\|_{(B^* M B )^{-1}} =  \|\widetilde r_k\|_M^{-1}$ can be seen directly from the fact that the $(1,1)-$block of $(B^* M B )^{-1}$ equals \cyan{$(b^* M \widetilde r_k)^{-1}= \|\widetilde r_k \|_M^{-2}$}.
\end{remark}
Of course, the same results adapt to the residuals $r_k$ of $I$-GMRES:
\[
\|r_k\|_I = \| B (B^* B )^{-1} e_1\|_I^{-1}=\|e_1\|_{(B^* B )^{-1}}^{-1} ,
\text{ and }
 \frac{\|r_k\|_M}{ \| r_k\|_I} = \frac{\| B (B^* B )^{-1} e_1\|_M }{\| B (B^* B )^{-1} e_1\|_I}.
\]
Finally, comparisons between the residual norms of unweighted and weighted GMRES can be drawn, \textit{e.g.}, 
\[
\frac{\|r_k\|_I}{\|\widetilde r_k\|_M} = \frac{\|e_1\|_{(B^*MB )^{-1}}}{\|e_1\|_{(B^*B )^{-1}}}= \frac{\| B (B^* M B )^{-1} e_1\|_M }{\| B (B^*B )^{-1} e_1\|_I },
\]
where each residual is in its natural norm,
\[
\frac{\|\widetilde r_k\|_I }{\|r_k\|_I} =   \frac{\| B (B^* M B )^{-1} e_1\|_I  \| B (B^* B )^{-1} e_1\|_I  }{\| B (B^* M B )^{-1} e_1\|_M^{2} },
\]
where both residuals are in the $I$-norm, and
\begin{equation}
\label{eq:Ipsenfinal}
\cyan{\frac{ \|\widetilde r_k\|_N}{ \| r_k\|_N} = \frac{\| B (B^* M B )^{-1} e_1\|_N}{ \| B (B^* B )^{-1} e_1\|_N }\frac{\| B (B^*B )^{-1} e_1\|_I^2}{\| B (B^* M B )^{-1} e_1\|_M^2}},
\end{equation}
where both residuals are measured in a third norm induced by some hpd matrix $N$. \cyan{A potential norm of interest could be the Euclidean norm of the error ($\|x - x_k\|_I$) which corresponds to the choice $N = (A A^*)^{-1}$.} 

We do not pursue this analysis here, but we do wish to mention two contributions of \cite{zbMATH01729203} which could also be extended to weighted GMRES: the first is a discussion on the choice of basis $W_k$, the second is the analysis of the effect on the bounds of adding a vector to $W_k$ when performing iteration $k$. 

\subsection{Prescribed convergence curves}
\label{sub:PCC}

We now turn back to the convergence curve formalism.  We start by proposing the construction of an inner product for which weighted GMRES achieves some desired convergence curve.

\begin{theorem} \label{th:there_exists_M}
    Let $(A, b)$ define a given linear system, and consider $\widetilde{g}\in\resdecrease$ a residual decrease vector.
    If $\widetilde{g}$ is of length $m:=\dim \kry_n(A, b)$, then
there exists an hpd matrix $M$ such that $M$-GMRES($A,b$) realizes the convergence curve induced by $\widetilde{g}$.
\end{theorem}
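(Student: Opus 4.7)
The plan is to apply \Cref{lem:characterization} in the $M$-inner product: its conclusion is equivalent to producing a nested, $M$-orthonormal basis $(\widetilde{w}_i)_{i=1,\dots,m}$ of $A\kry_m(A,b)$ satisfying $b = \sum_{i=1}^m \widetilde{g}_i \widetilde{w}_i$. I would build such a basis and the matrix $M$ simultaneously, using \Cref{lem:link_between_bases} as the bridge from an available Euclidean-orthonormal basis to the desired $M$-orthonormal one.

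Concretely, I would first fix a nested, Euclidean-orthonormal basis $(w_i)_{i=1,\dots,m}$ of $A\kry_m(A,b)$ given by \Cref{th:unicityONB}(ii), so that $g_i := \ip{b}{w_i}_I \geq 0$, and extend it arbitrarily to an Euclidean-orthonormal basis of $\mathbb{C}^n$ with matrix $W$. Then $b = Wc$ with $c := (g_1,\dots,g_m,0,\dots,0)^\top$, and moreover $g_m = \|r_{m-1}\|_I > 0$ because the $I$-GMRES breakdown occurs exactly at iteration $m$. I would then seek the new basis in the form $\widetilde{W} := WT$, where $T$ has the block-triangular shape \eqref{eq:T_form}, with leading $m\times m$ upper triangular block $\widehat{T}$ to be determined, top-right block zero, and bottom-right block $I_{n-m}$; this makes $T$ invertible iff $\widehat{T}$ is. With $\widetilde{c} := (\widetilde{g}_1,\dots,\widetilde{g}_m,0,\dots,0)^\top$, the desired $b = \sum_{i=1}^m \widetilde{g}_i \widetilde{w}_i$ reads $WT\widetilde{c} = Wc$, i.e. the single vector equation
\[
\widehat{T}\,(\widetilde{g}_1,\dots,\widetilde{g}_m)^\top \;=\; (g_1,\dots,g_m)^\top.
\]
Finally, I would set $M := (\widetilde{W}\widetilde{W}^*)^{-1}$, which is hpd because $\widetilde{W}$ is non-singular, and which makes $(\widetilde{w}_i)_{i=1,\dots,n}$ an $M$-orthonormal basis via $\widetilde{W}^*M\widetilde{W} = I$. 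Combining this with \Cref{lem:link_between_bases} (which ensures that $(\widetilde{w}_i)_{i=1,\dots,m}$ is nested in $A\kry_m(A,b)$) and then \Cref{lem:characterization} in the $M$-inner product closes the argument.

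The main obstacle I anticipate is the remaining construction step: exhibiting an invertible, upper triangular $\widehat{T}$ satisfying the above vector equation. The hypothesis that $\widetilde{g}$ has length exactly $m$ forces $\widetilde{\mathtt{r}}_{m-1} > \widetilde{\mathtt{r}}_m = 0$, hence $\widetilde{g}_m > 0$, which is precisely what unlocks the construction. One explicit valid choice is
\[
\widehat{T}_{ii} = 1 \text{ for } i<m, \quad \widehat{T}_{mm} = \frac{g_m}{\widetilde{g}_m}, \quad \widehat{T}_{im} = \frac{g_i - \widetilde{g}_i}{\widetilde{g}_m} \text{ for } i<m,
\]
with all other entries zero: a direct row-by-row check gives $\widehat{T}\widetilde{g} = g$, and the diagonal entries $1,\dots,1,g_m/\widetilde{g}_m$ are all nonzero (since $g_m,\widetilde{g}_m>0$), so $\widehat{T}$ is invertible. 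Without the length-$m$ hypothesis, a trailing zero in $\widetilde{g}$ matched with a nonzero entry of $g$ would create an obstruction, which highlights why the compatibility of breakdown indices is essential for the result to hold.
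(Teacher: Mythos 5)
Your proposal is correct and follows essentially the same route as the paper: reduce via \Cref{lem:characterization} to finding a nested $M$-orthonormal basis $\widetilde{W}=WT$ with $b=\widetilde{W}\widetilde{g}$, construct an invertible upper triangular $\widehat{T}$ mapping $\widetilde{g}$ to $g$, and set $M:=(\widetilde{W}\widetilde{W}^*)^{-1}$. Your explicit $\widehat{T}$ (ones on the diagonal except $\widehat{T}_{mm}=g_m/\widetilde{g}_m$, with corrections in the last column) is a valid, and in fact slightly cleaner, single-formula alternative to the paper's four-case construction, and it correctly exploits $g_m,\widetilde{g}_m>0$ exactly where the paper does.
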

\begin{proof}
    Considering \Cref{lem:characterization}, the proof hinges on finding an hpd matrix $M$ and a basis $(\widetilde{w}_i)_{i=1,\dots,n}$ such that: $(\widetilde{w}_i)_{i=1,\dots,m}$ is an $M$-orthonormal nested basis of $A\kry_m(A, b)$, and the decomposition of $b$ in this basis writes
    \begin{equation} \label{eq:b_in_W}
        b =  \sum_{i=1}^m \widetilde{g}_i \widetilde{w}_i.
    \end{equation}
   \cyan{According to \Cref{th:unicityONB}, there exists $(w_i)_{i=1,\dots,m}$ an $I$-orthonormal nested basis of $A\kry_m(A, b)$ such that $\ip{b}{w_i} \in \mathbb R_+$ for each $i$.
 We complete the set into an ($I$-)orthonormal basis $(w_i)_{i=1,\dots,n}$ of  $\mathbb{C}^n$. 
    We denote by $W$ its column-matrix representation and let  $g := W^{-1}b = W^* b$. (Although we do not use this in the proof, we remark that $g$ is the residual decrease vector for $I$-GMRES$(A,b)$.)
}

    We define the upper triangular matrix $\widehat{T}\in\mathbb{C}^{m\times m}$ by setting all coefficients to zero, except
    \begin{equation*}
        \begin{cases}
            \widehat{T}_{ii} := g_i / \widetilde{g}_i &\text{ if } g_i \neq 0 \text{ and } \widetilde{g}_i \neq 0, \\
            \widehat{T}_{ii} := 1                     &\text{ if } g_i = \widetilde{g}_i = 0, \\
            \widehat{T}_{ii} := -g_m / \widetilde{g}_i,\quad \widehat{T}_{im} := g_m / \widetilde{g}_m &\text{ if } g_i = 0 \text{ and } \widetilde{g}_i \neq 0, \\
            \widehat{T}_{ii} := 1,\quad \widehat{T}_{im} := g_i / \widetilde{g}_m &\text{ if } g_i \neq 0 \text{ and } \widetilde{g}_i = 0.
        \end{cases}
    \end{equation*}
    Note that $\widetilde{g}_m \neq 0$ because $\widetilde{g}$ is of length $m$, and $g_m \neq 0$ because $m$ is the breakdown index of GMRES($A,b$). 
    Consequently, all diagonal coefficients are non-zero, ensuring that $\widehat{T}$ is invertible.
    We then simply define
    \begin{equation} \label{eq:T_def}
        T := \left(
            \begin{array}{c|c}
                    \widehat{T} &  \\ 
                    \hline
                      & I
            \end{array}
        \right),
    \end{equation}
    where $I$ is the identity matrix of size $(n-m)$.
    $T$ is clearly invertible.
    Consequently, by \Cref{lem:link_between_bases}, the matrix
        $\widetilde{W} := WT$
    defines a nested basis of $A\kry_m(A, b)$. 
    Additionally, one can easily verify that this definition of $T$ enforces the relation 
    \begin{equation*}
        g = T \widetilde{g},
    \end{equation*}
    which, multiplied by $W$ on the left, yields
    \begin{equation*}
        b = \widetilde{W}\widetilde{g},
    \end{equation*}
    \emph{i.e.} that relation \eqref{eq:b_in_W} holds.
    
    It remains only to find $M$ such that $\widetilde{W}$ has $M$-orthonormal columns: we set
    \begin{equation} \label{eq:M_def}
        M := (\widetilde{W} \widetilde{W}^*)^{-1},
    \end{equation}
    and verify that
    \begin{equation*}
      \widetilde{W}^* M \widetilde{W} 
      = \widetilde{W}^* (\widetilde{W} \widetilde{W}^*)^{-1} \widetilde{W} 
      = \widetilde{W}^* \widetilde{W}^{-*} \widetilde{W}^{-1}\widetilde{W}
      = I.
    \end{equation*}
    We conclude by applying \Cref{lem:characterization} in the $M$-inner product.
\end{proof}
    In the proof above, if $\widetilde{g}_i$ and $g_i$ have no zeroes, \emph{i.e.},\ if $M$-GMRES and $I$-GMRES never stagnate (or if $\widetilde{g}_i = 0 \Leftrightarrow g_i = 0$, \emph{i.e.} if $M$-GMRES and $I$-GMRES stagnate at the same iterations) then the proposed $T$ is a diagonal matrix and $\widetilde{W}$ is simply a scaling of $W$.

\begin{corollary}[Any two convergence curves are simultaneously possible] \label{cor:weight-simultaneous}
    Consider the convergence curves induced by $g$ and $\widetilde g\in\resdecrease$ of same length.\newline
    There exists a system $(A,b)$ and an hpd matrix $M$ such that $I$-GMRES$(A,b)$ realizes $g$, and $M$-GMRES$(A,b)$ realizes $\widetilde g$. Additionally, the eigenvalues of $A$ can be prescribed.
\end{corollary}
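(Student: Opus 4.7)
The plan is to chain together the two main results already established: \Cref{th:existenceAb} of Greenbaum--Pták--Strakoš (which produces a system with prescribed spectrum realizing a given convergence curve in the Euclidean norm) and \Cref{th:there_exists_M} (which produces a weight matrix $M$ realizing a second curve on a given system). The matching-length hypothesis is exactly what allows these two results to be composed.

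First, I would invoke \Cref{th:existenceAb} with the prescribed eigenvalues $(\lambda_i)_{i=1,\dots,n}$ and the residual decrease vector $g$ of length $m$. This produces a linear system $(A,b)$ whose spectrum is $\{\lambda_1,\dots,\lambda_n\}$ and such that $I$-GMRES$(A,b)$ realizes the convergence curve induced by $g$. By \Cref{lem:characterization} applied in the Euclidean inner product (or equivalently by the remark that the length of the realized decrease vector coincides with the breakdown index, which was noted right after \Cref{lem:characterization}), the breakdown index of GMRES$(A,b)$ satisfies $m = \dim\kry_n(A,b)$.

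Next, since by assumption $\widetilde{g}$ also has length $m$, the hypothesis of \Cref{th:there_exists_M} is satisfied for the system $(A,b)$ just constructed: the length of $\widetilde{g}$ matches the breakdown index $\dim\kry_n(A,b)=m$. Applying \Cref{th:there_exists_M} directly yields a hpd matrix $M$ such that $M$-GMRES$(A,b)$ realizes the convergence curve induced by $\widetilde{g}$. The spectrum of $A$ is untouched by this construction, since $M$ only affects the inner product used in the second run of GMRES. This delivers simultaneously the two prescribed convergence curves and the prescribed spectrum, concluding the proof.

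There is really no hard step here: the argument is almost a tautology once the right lemmas are in place. The only point worth flagging is the compatibility between the two theorems: \Cref{th:existenceAb} fixes the breakdown index to the length of $g$, and \Cref{th:there_exists_M} requires the target vector to have exactly this length. This is precisely why the statement of the corollary assumes that $g$ and $\widetilde{g}$ have the same length, and if one wanted to relax that assumption one would run into a genuine obstruction (the breakdown index of $(A,b)$ is an invariant of the system, so $I$-GMRES and $M$-GMRES must terminate at the same iteration).
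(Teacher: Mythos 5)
Your proof is correct and follows exactly the same route as the paper: first invoke \Cref{th:existenceAb} to build $(A,b)$ with the prescribed spectrum realizing $g$, then apply \Cref{th:there_exists_M} to that system to obtain $M$ realizing $\widetilde{g}$. Your explicit check that the breakdown index equals the common length $m$ (justifying the hypothesis of \Cref{th:there_exists_M}) is a detail the paper leaves implicit, but the argument is identical.
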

\begin{proof}
By \Cref{th:existenceAb}, there exists a system $(A,b)$ such that $I$-GMRES$(A,b)$ realizes $g$. Moreover, the eigenvalues of $A$ can be prescribed. With these $A$ and $b$, the result follows from applying \Cref{th:there_exists_M}.
\end{proof}

\cyan{
In the proof, the system $(A,b)$ is set to satisfy only the prescribed convergence curve $g$ for $I$-GMRES. Then, the second prescribed convergence curve imposes a condition on the inner product $M$ without any further restriction on $(A,b)$. For this reason, we can impose more than two convergence curves in the following sense. 
\begin{corollary}[Any number of convergence curves is simultaneously possible]
    Consider the $K + 1$ convergence curves induced by $g$ and $(\widetilde g^{(k)})_{k=1,\dots,K} \in\resdecrease$. Assume that they all have the same length.\newline
    There exists a system $(A,b)$ and $K$ hpd matrices $M^{(k)}$ such that $I$-GMRES$(A,b)$ realizes $g$, and $M^{(k)}$-GMRES$(A,b)$ realizes $\widetilde g^{(k)}$. Additionally, the eigenvalues of $A$ can be prescribed.
\end{corollary}
}
In the following theorem, we characterize the form of all weight matrices $M$ for which $M$-GMRES realizes a prescribed convergence curve.

\begin{theorem}[Characterization of the weight matrix]
\label{th:allMs}
    Let $(A,b)$ define a given linear system, {$m := \dim \kry_n(A, b)$}, and $W\in\mathbb{C}^{n\times n}$ be a unitary matrix whose first $m$ columns form a nested basis of $A\kry_m(A,b)$. Consider the residual decrease vector $\widetilde{g}\in\resdecrease$ of length $m$.
    \newline
    Consider $M$ an hpd matrix. The two following statements are equivalent:   \begin{enumerate}[label=(\roman*)]
    \item $M$-GMRES$(A,b)$ realizes the convergence curve induced by $\widetilde{g}$.
    \item There exists $T\in\mathbb{C}^{n\times n}$ of the form \eqref{eq:T_form} such that
    \begin{equation*}
        M^{-1} = W T (WT)^*
        \quad\text{ and }\quad
        b = WT\widetilde{g}.
    \end{equation*}
    \end{enumerate}
\end{theorem}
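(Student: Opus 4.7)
The plan is to reduce the statement to \Cref{lem:characterization} applied in the $M$-inner product, and then use \Cref{lem:link_between_bases} to parametrize any $M$-orthonormal nested basis of $A\kry_m(A,b)$ in terms of the fixed reference basis $W$. The key algebraic observation is that if $\widetilde{W} \in \mathbb C^{n\times n}$ is an invertible extension of such a basis, then $\widetilde{W}$ is globally $M$-unitary (i.e.\ $\widetilde{W}^* M \widetilde{W} = I$) if and only if $M^{-1} = \widetilde{W}\widetilde{W}^*$. All the work is to connect $\widetilde{W}$ to a matrix of the form $WT$ with $T$ structured as in \eqref{eq:T_form}.

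For the direction $(ii) \Rightarrow (i)$, I would set $\widetilde{W} := WT$. The hypothesis $M^{-1} = \widetilde{W}\widetilde{W}^*$ immediately gives $\widetilde{W}^* M \widetilde{W} = I$, so the columns of $\widetilde{W}$ are $M$-orthonormal. Because $T$ has the block form \eqref{eq:T_form} and $W$'s first $m$ columns form a nested basis of $A\kry_m(A,b)$, \Cref{lem:link_between_bases} guarantees that the first $m$ columns of $\widetilde{W}$ also form a nested basis of $A\kry_m(A,b)$. Finally $b = WT\widetilde{g} = \widetilde{W}\widetilde{g} = \sum_{i=1}^m \widetilde{g}_i \widetilde{w}_i$ since $\widetilde{g}$ has length $m$. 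Applying \Cref{lem:characterization} in the $M$-inner product yields $(i)$.

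For the converse $(i) \Rightarrow (ii)$, I would invoke \Cref{lem:characterization} (in the $M$-inner product) to obtain a nested $M$-orthonormal basis $(\widetilde w_i)_{i=1,\dots,m}$ of $A\kry_m(A,b)$ satisfying $b = \sum_{i=1}^m \widetilde g_i \widetilde w_i$. Then extend this family to an $M$-orthonormal basis $(\widetilde w_i)_{i=1,\dots,n}$ of $\mathbb C^n$ by $M$-Gram--Schmidt on the $M$-orthogonal complement of $A\kry_m(A,b)$. Let $\widetilde W$ be its column matrix; by construction $\widetilde W^* M \widetilde W = I$, hence $M^{-1} = \widetilde W \widetilde W^*$. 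Because the first $m$ columns of $W$ and of $\widetilde W$ are both nested bases of $A\kry_m(A,b)$, \Cref{lem:link_between_bases} (with $W$ itself taking the role of the reference unitary basis) provides a non-singular matrix $T$ of the form \eqref{eq:T_form} such that $\widetilde W = W T$. Substituting gives $M^{-1} = WT(WT)^*$, and since $\widetilde g_i=0$ for $i>m$,
\[
WT\widetilde g = \widetilde W \widetilde g = \sum_{i=1}^m \widetilde g_i \widetilde w_i = b,
\]
which completes $(ii)$.

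The only subtle step is the $M$-orthonormal extension of the nested basis to all of $\mathbb C^n$; since the ambient space is finite-dimensional and the $M$-inner product is well defined, this is handled by a standard $M$-Gram--Schmidt argument and is not a real obstacle. The rest is a mechanical translation between ``$\widetilde W$ is $M$-unitary'' and ``$M^{-1}=\widetilde W \widetilde W^*$'', plus a routine application of \Cref{lem:link_between_bases} to identify the allowed structure of the change-of-basis matrix $T$.
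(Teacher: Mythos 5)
Your proof is correct and follows essentially the same route as the paper's: \Cref{lem:characterization} applied in the $M$-inner product, combined with \Cref{lem:link_between_bases} to write $\widetilde W = WT$, and the algebraic equivalence between $\widetilde W^* M \widetilde W = I$ and $M^{-1} = \widetilde W \widetilde W^*$. The only difference is that you spell out the two implications separately and explicitly handle the extension of the $m$-vector $M$-orthonormal nested basis to a full $M$-orthonormal basis of $\mathbb{C}^n$, a step the paper's more condensed ``if and only if'' chain leaves implicit.
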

\begin{proof}
    Using \Cref{lem:characterization}, $M$-GMRES$(A,b)$ realizes the convergence curve induced by $\widetilde{g}$ if and only if there exists a matrix $\widetilde{W}$ whose columns are $M$-orthonormal, whose first $m$ columns form a nested basis of $A\kry_m(A,b)$, and such that
  $\widetilde{W} \widetilde{g} = b$. 
    By \Cref{lem:link_between_bases}, this is equivalent to: there exist a matrix $T$ of the form \eqref{eq:T_form} and a unitary matrix $W$ whose first $m$ columns form a nested basis of $A\kry_m(A,b)$ such that $\widetilde W = W T$, and $\widetilde W$ has $M$-orthonormal columns. Finally,

    \begin{equation*}
    \left\{
    \begin{aligned}
    \widetilde{W}^* M \widetilde{W} = I   \\
    \widetilde{W} \widetilde{g} =b
    \end{aligned}
    \right.
    \Leftrightarrow
    \left\{
    \begin{aligned}
     M  = (W T (WT)^*)^{-1} \\
    b = WT \widetilde{g}.
    \end{aligned}
    \right.    
    \end{equation*}
\end{proof}

\begin{remark}
    In \Cref{th:allMs} (ii), $M$ does not depend on the particular choice of $W$, since all possibilities differ only by scalar factors of modulus 1 for each vector (see \Cref{th:unicityONB}). Indeed, the multiplicative factor of modulus 1 can be carried over to the corresponding row of $T$.
\end{remark}

In the preceding results, the system $(A,b)$ was fixed, and we explored the weight matrices $M$ that enforce a prescribed convergence curve for $M$-GMRES$(A,b)$. 
We now fix the weight matrix $M$, and investigate the existence of a system $(A,b)$ such that both the convergence curves of $I$-GMRES$(A,b)$ and $M$-GMRES$(A,b)$ are prescribed.

\begin{theorem}[Simultaneous prescription of two convergence curves] \label{th:two_cc_characterization}
    Let $M\in\mathbb{C}^{n\times n}$ be an hpd matrix, and consider the residual decrease vectors $g$ and $\widetilde{g}\in\resdecrease$ of matching length. Let $\lambda_1$,\dots, $\lambda_n$ be $n$ non-zero complex numbers. 
    The two following statements are equivalent:
\begin{enumerate}[label=(\roman*)]
    \item\label{it:two-cc-1} There exists a system $(A,b)$ such that the eigenvalues of $A$ are the $(\lambda_i)_i$, $I$-GMRES($A,b$) realizes $g$ and $M$-GMRES($A,b$) realizes $\widetilde{g}$.
    \item\label{it:two-cc-2} There exists a non-singular matrix $T$ of the form \eqref{eq:T_form} such that $g = T\widetilde{g}$ and the singular values of $T$ are $(1/\sqrt{\mu_i})_i$ with $(\mu_i)_i$ denoting the (positive) eigenvalues of $M$.
\end{enumerate}
\end{theorem}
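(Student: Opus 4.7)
The strategy is to combine \Cref{lem:characterization}, \Cref{th:allMs}, and \Cref{th:existenceAb} with the elementary fact that any two hpd matrices sharing the same spectrum are unitarily similar.

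\textbf{Direction (i) $\Rightarrow$ (ii).} Starting from a system $(A,b)$ for which $I$-GMRES realizes $g$ and $M$-GMRES realizes $\widetilde g$, I would first apply \Cref{lem:characterization} in the Euclidean inner product: the assumption yields an $I$-unitary matrix $W$ whose first $m$ columns form a nested basis of $A\kry_m(A,b)$ and which satisfies $b=Wg$. Since $M$-GMRES$(A,b)$ also realizes $\widetilde g$, \Cref{th:allMs} supplies a matrix $T$ of the form \eqref{eq:T_form} satisfying $b=WT\widetilde g$ and $M^{-1}=WT(WT)^*$. Comparing the two expressions for $b$ and using that $W$ is invertible yields $g=T\widetilde g$. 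Conjugating $M^{-1}=WTT^*W^*$ by the unitary $W$ gives $TT^*=W^*M^{-1}W$, so $TT^*$ and $M^{-1}$ share the eigenvalues $\{1/\mu_i\}_i$; taking square roots recovers the advertised singular values of $T$.

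\textbf{Direction (ii) $\Rightarrow$ (i).} From the hypothesis, $TT^*$ is hpd with eigenvalues $\{1/\mu_i\}_i$, which is also the spectrum of $M^{-1}$. By the spectral theorem I would choose a unitary $U$ such that $M^{-1}=UTT^*U^*$. I would then invoke \Cref{th:existenceAb} to build a system $(A,b)$ whose eigenvalues are the prescribed $\{\lambda_i\}_i$ and for which $I$-GMRES realizes $g$, \emph{taking care moreover that the first $m$ columns of $U$ form the nested orthonormal basis of $A\kry_m(A,b)$}. Setting $W:=U$, \Cref{lem:characterization} gives $b=Wg=W(T\widetilde g)=(WT)\widetilde g$, while by construction $M^{-1}=WT(WT)^*$. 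Both conditions of \Cref{th:allMs}\,(ii) are thus met, so $M$-GMRES$(A,b)$ realizes $\widetilde g$.

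\textbf{Main obstacle.} The nontrivial point is the italicized clause above: \Cref{th:existenceAb} as stated guarantees only the existence of \emph{some} $(A,b)$ realizing $g$ with prescribed spectrum, not one whose nested orthonormal basis equals the specific $U$ produced by the spectral step. To close the argument I would revisit the constructive proofs of \cite[Th.\ 2.1]{greenbaum_any_1996} and \cite[Th.\ 3]{duintjer_tebbens_prescribing_2014}: those proofs fix an arbitrary $I$-orthonormal (nested) basis, set $b$ to be its expansion with coefficients $g_i$, and then build $A$ of prescribed spectrum whose action on that basis produces the desired Krylov nesting. Feeding our particular $U$ into that recipe yields exactly the $(A,b)$ required, and the two constructions become compatible. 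The compatibility of the lengths of $g$ and $\widetilde g$ ensures that the breakdown index $m$ fixed by $(A,b)$ matches both convergence curves.
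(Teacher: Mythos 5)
Your proposal is correct and follows essentially the same route as the paper: the forward direction extracts $T$ from the two nested orthonormal bases (you via \Cref{th:allMs}, the paper directly via \Cref{lem:link_between_bases}) and reads off the singular values from $TT^*=W^*M^{-1}W$, while the converse picks a unitary $W$ conjugating $TT^*$ to $M^{-1}$, sets $b:=Wg$, and builds $A$ with \Cref{th:existenceAb}. The ``main obstacle'' you flag --- forcing the constructed system's nested orthonormal residual basis to be the chosen $W$ --- is genuine but is resolved exactly as you suggest, by feeding $W$ into the constructive proof (the APS-type parametrization lets one prescribe this basis); the paper performs this step implicitly without comment, so your treatment is, if anything, the more careful one.
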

Remark that, once more, there is no condition at all on the spectrum of $A$. In \Cref{sub:analysisth14}, the conditions on $T$ in the second item of the theorem are analyzed and discussed.
\begin{proof}
    Assume that \ref{it:two-cc-1} holds.
    According to \Cref{lem:characterization}, there exists an $I$-orthonormal (resp.\ $M$-orthonormal) basis of $\mathbb{C}^n$ such that $(w_i)_{i=1,\dots,m}$ (resp.\ $(\widetilde{w}_i)_{i=1,\dots,m}$) is a nested basis of $A\kry_m(A, b)$ verifying
    \begin{equation} \label{eq:b_Wg_Wt_gt}
        b = Wg = \widetilde{W}\widetilde{g}.
    \end{equation}
    According to \Cref{lem:link_between_bases} \ref{it:two-cc-1}, there exists a non-singular matrix $T$ of the form \eqref{eq:T_form} such that
    \begin{equation} \label{eq:Wt_WT}
        \widetilde{W} = WT.
    \end{equation}
    Plugging \eqref{eq:Wt_WT} into \eqref{eq:b_Wg_Wt_gt}, we infer $g = T\widetilde{g}$. Additionally, $\widetilde{W}$ having $M$-orthonormal columns writes
    \begin{equation*}
        I = \widetilde{W}^*M\widetilde{W} = T^*W^*MWT \quad \Leftrightarrow \quad 
 (TT^*)^{-1}=W^*M W  \quad \Leftrightarrow \quad  TT^* = W^* M^{-1} W.
    \end{equation*}
    Since \cyan{$W^* = W^{-1}$}, the matrices $TT^*$ and $M^{-1}$ are similar, so they share the same set of eigenvalues $(1/\mu_i)_i$. Consequently, the singular values of $T$ are the $(1/\sqrt{\mu_i})_i$.
    
    Conversely, assume that \ref{it:two-cc-2} holds. Then, $TT^*$ and $M^{-1}$ are two hpd matrices that share the same set of eigenvalues, so there exists a unitary matrix $W$ such that 
    \begin{equation}
        \label{eq:similar}
        TT^* = W^* M^{-1} W.
    \end{equation}
    Let us pose $b := Wg$. 
    By \Cref{th:existenceAb}, there exists a matrix $A$ such that the eigenvalues of $A$ are the $(\lambda_i)_i$, and $I$-GMRES($A,b$) realizes the convergence curve induced by $g$. 
    We next check that $M$-GMRES($A,b$) realizes the convergence curve induced by $\widetilde{g}$.  
    Let $m := \dim \kry_n(A, b)$.
    Letting $\widetilde{W} := WT$ ensures that the first $m$ columns of $\widetilde{W}$ form a nested basis of $A\kry_m(A, b)$ and that $b = \widetilde{W}\widetilde{g}$. By \Cref{lem:characterization}, it remains only to prove that $\widetilde{W}$ is $M$-orthonormal, which comes from 
    \[
 \widetilde{W}^*M\widetilde{W} =  T^*W^*MWT = I
    \]
    by \eqref{eq:similar}.
   
\end{proof}

\begin{theorem}
\label{eq:bound_svd_T}
Let $(A,b)$ define a given linear system and $M$ (hpd) define an inner product. Let $g$ and $\widetilde{g}$ be the residual decrease vectors realized by $I$- and $M$-GMRES, respectively. Let $m$ be their common length. By \Cref{th:two_cc_characterization}, there exists a non-singular matrix $T$ of the form \eqref{eq:T_form} such that $g = T\widetilde{g}$. We denote by $T_{k+1:m}$ the block of $T$ consisting of the rows and columns between indices $k+1$ and $m$. If, $\sigma_{\min}(T_{k+1:m})$ and  $\sigma_{\max}(T_{k+1:m})$ denote the extreme singular values of this matrix, then 
\begin{equation} \label{eq:r_sigma}
\sigma_{\min}(T_{k+1:m})   \|\widetilde r_k \|_M \leq \|r_k\|_I \leq \sigma_{\max}(T_{k+1:m}) \|\widetilde r_k \|_M,
\end{equation}
where, as usual, $r_k$ and $\widetilde{r}_k$ are the $k$-th residual vectors produced by $I$- and $M$-GMRES($A,b$), respectively. 
\end{theorem}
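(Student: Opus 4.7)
The plan is to exploit the block upper-triangular structure of $T$ together with the identity $g = T\widetilde{g}$ from \Cref{th:two_cc_characterization} in order to express the tail partial sums $\sum_{j=k+1}^{m} g_j^2$ and $\sum_{j=k+1}^{m} \widetilde{g}_j^2$ as squared Euclidean norms of a small vector and its image under the square submatrix $T_{k+1:m}$. Once this is done, the bounds \eqref{eq:r_sigma} are nothing more than the standard singular value inequality $\sigma_{\min}(T_{k+1:m})\|x\|_I \leq \|T_{k+1:m}\,x\|_I \leq \sigma_{\max}(T_{k+1:m})\|x\|_I$.

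In more detail, I would first invoke formula \eqref{eq:r_from_g} (interpreting both sides in their natural norms, in view of \Cref{lem:characterization}) to write
\[
\|r_k\|_I^2 = \sum_{j=k+1}^{m} g_j^2 = \|g_{(k+1:m)}\|_I^2, \qquad \|\widetilde{r}_k\|_M^2 = \sum_{j=k+1}^{m} \widetilde{g}_j^2 = \|\widetilde{g}_{(k+1:m)}\|_I^2,
\]
where $g_{(k+1:m)}$ and $\widetilde{g}_{(k+1:m)}$ denote the subvectors of entries of indices $k+1,\dots,m$. Next, I would use the specific form \eqref{eq:T_form} of $T$: because the block $\widehat{T}\in\mathbb{C}^{m\times m}$ is upper triangular, for any $j\in\{k+1,\dots,m\}$ one has $(T\widetilde{g})_j = \sum_{i=j}^{m} \widehat{T}_{ji}\,\widetilde{g}_i$, since $\widehat{T}_{ji}=0$ for $i<j$ and $\widetilde{g}_i=0$ for $i>m$. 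In particular, the entry $(T\widetilde{g})_j$ involves no $\widetilde{g}_i$ with $i\le k$. This precisely means that the restriction of $g = T\widetilde{g}$ to rows $k+1,\dots,m$ reads
\[
g_{(k+1:m)} = T_{k+1:m}\,\widetilde{g}_{(k+1:m)}.
\]

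The two steps above combine immediately: applying the singular value inequalities to the square matrix $T_{k+1:m}$ gives
\[
\sigma_{\min}(T_{k+1:m})\,\|\widetilde{g}_{(k+1:m)}\|_I \leq \|T_{k+1:m}\,\widetilde{g}_{(k+1:m)}\|_I \leq \sigma_{\max}(T_{k+1:m})\,\|\widetilde{g}_{(k+1:m)}\|_I,
\]
and substituting the identifications from the previous paragraph yields \eqref{eq:r_sigma}. I would also briefly remark that $T_{k+1:m}$, being a trailing principal submatrix of the upper triangular invertible matrix $\widehat{T}$, inherits nonzero diagonal entries and is therefore itself invertible, so $\sigma_{\min}(T_{k+1:m})>0$ and the lower bound is nontrivial.

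There is no real obstacle here: the only point requiring a little care is the reduction from the full product $T\widetilde{g}$ to the square submatrix $T_{k+1:m}$, which rests on the upper triangular structure of $\widehat{T}$ and the fact that $\widetilde{g}$ has length $m$. Once that observation is made, the result is a one-line application of the extremal singular value characterization.
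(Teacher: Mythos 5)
Your proposal is correct and follows essentially the same route as the paper's proof: restrict $g = T\widetilde{g}$ to the trailing block using the upper triangular structure of $\widehat{T}$, identify the tail sums $\sum_{j>k} g_j^2$ and $\sum_{j>k} \widetilde{g}_j^2$ with $\|r_k\|_I^2$ and $\|\widetilde{r}_k\|_M^2$ via \eqref{eq:r_from_g}, and conclude with the extremal singular value inequalities for $T_{k+1:m}$. The added remark that $T_{k+1:m}$ is invertible (so the lower bound is nontrivial) is a small, correct bonus not spelled out in the paper.
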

    \begin{proof}   
 In this proof we use the (MATLAB inspired) notation $y(a:b)$ (respectively, $Y(a:b,a:b)$) to select elements in a vector $y$ (respectively, submatrices in a matrix $Y$) that correspond to indices between $a$ and $b$.
Since $g = T \widetilde{g}$ with $T$ of the form \eqref{eq:T_form}, and only the first $m$ entries of $g$ and $\widetilde{g}$ are possibly non-zero,
\[
 g(1:m) = T(1:m,1:m) \widetilde{g}(1:m) .
\]
Moreover, $\widehat{T} = T(1:m,1:m)  $ is upper triangular, so for any $k\leq m-1$,
\[
 g(k+1:m) =  T(k+1:m,k+1:m)  \widetilde{g}(k+1:m) =  T_{k+1:m}  \widetilde{g}(k+1:m),
\]
with the notation introduced in the theorem. It follows that 
\[
\|g(k+1:m)\|_I^2 =  \|\widetilde{g}(k+1:m)\|^2_{ T_{k+1:m}^*  T_{k+1:m} },
\]
and
\begin{equation*}
 \sigma_{\min}( T_{k+1:m})^2 \|\widetilde{g}(k+1:m)\|_I^2 \leq \| g(k+1:m)\|_I^2 \leq \sigma_{\max}^2( T_{k+1:m}) \|\widetilde{g}(k+1:m)\|_I^2,
\end{equation*}
which is equivalent to the result in the theorem by definition of the residual decrease vectors. 
    \end{proof}

By an interlacing theorem for singular values \cite[Corollary 7.3.6]{zbMATH06125590}, 
\[
[\sigma_{\min}( T_{k+1:m}), \sigma_{\max}( T_{k+1:m})] \subset [\sigma_{\min}( T), \sigma_{\max}( T)].
\]
Together with \Cref{eq:bound_svd_T} and \eqref{eq:r_from_g}, this provides a new proof for \eqref{eq:IM-GMRES-easy}. 

\begin{remark}[Comparison with {\cite[Theorem 3.19]{pestana_nonstandard_2011}}]
The previous result reminds us of a result by Pestana \cite[Theorem 3.19]{pestana_nonstandard_2011}. There, two nested bases for the Krylov subspace $\kry_k(A,b)$ at iteration $k$ are denoted by $V_k^I$ and $V_k^M$, where the first is $I$-orthonormal and the second is $M$-orthonormal. Letting $R_k$ be the non-singular upper triangular matrix, such that $V_k^I = V_k^M R_k$, the author proves that
\begin{equation} \label{eq:pestana_r_sigma}
\sigma_{\min}(R_{k+1}^{-1})   \|\widetilde r_i \|_M \leq \|r_i\|_I \leq \sigma_{\max}(R_{k+1}^{-1}) \|\widetilde r_i \|_M .
\end{equation}
A significant difference between both bounds is that $R_m$ links two Krylov bases while our $\widehat{T}$ links two Krylov residual bases. 
A consequence is that the quantities in the bound in \cite{pestana_nonstandard_2011} can be computed at the considered iteration whereas our bound looks ahead.  
\end{remark}

\subsection{Analysis of the necessary and sufficient condition in \Cref{th:two_cc_characterization}}
\label{sub:analysisth14}
It appears to us that \Cref{th:two_cc_characterization} is crucial in predicting whether the convergence of GMRES$(A,b)$ can be significantly modified by changing the inner product. The necessary and sufficient condition in item~\ref{it:two-cc-2} of \Cref{th:two_cc_characterization} is unfortunately not very natural. Assuming for simplicity that there is no breakdown, there are $n(n+1)/2$ coefficients in $T$. \cyan{The condition $g = T \widetilde g$ produces one linear constraint within each row of $T$ which leaves $n(n-1)/2$ degrees of freedom to get the correct singular values.} 

For a $2 \times 2$ matrix, we can examine the compatibility between ensuring $g = T \widetilde g$ and setting the singular values to $\sigma_1$ and $\sigma_2$.  Let $T = \begin{pmatrix} a & b \\ 0 & d\end{pmatrix}$, $g=(g_1,g_2)$, $\widetilde g = (\widetilde g_1,\widetilde g_2)$, with $g_2,\widetilde g_2 \neq 0$. If $g = T \widetilde g$ then 
\begin{subequations} \label{eq:abd}
\begin{equation}
d = \frac{g_2}{\widetilde g_2} \text{ and } b  = \frac{g_1 - a \widetilde g_1 }{\widetilde g_2}, 
\end{equation}
so only $a$ is still free. However, the product of the singular values equals $\det(T)$ so,
\begin{equation}
\sigma_1 \sigma_2 = ad = \frac{ a g_2}{\widetilde g_2} \, \Leftrightarrow a = \frac{\sigma_1 \sigma_2 \widetilde g_2}{g_2},
\end{equation}
\end{subequations}
and $T$ has been entirely set. In order for $T$ to satisfy the conditions in item~\ref{it:two-cc-2} of \Cref{th:two_cc_characterization}, it remains to ensure one final condition which can take the form of a condition on $\sigma_1^2 + \sigma_2^2$ 
\begin{equation} \label{eq:condition_singular_values}
\sigma_1^2 + \sigma_2^2 = \tr(T^* T) = a^2 + b^2 + d^2 = \left(\frac{\sigma_1 \sigma_2 \widetilde g_2}{g_2}\right)^2 + \left( \frac{g_1 g_2 - {\sigma_1 \sigma_2 \widetilde g_2} \widetilde g_1}{g_2 \widetilde g_2} \right)^2  + \left(  \frac{g_2}{\widetilde g_2}  \right)^2.
\end{equation}
Unless $\sigma_1$, $\sigma_2$, $g$ and $\widetilde g$ satisfy the above, there is no system $(A,b)$ such that $I$-GMRES realizes $g$ and $M$-GMRES realizes $\widetilde g$. 

\cyan{
\begin{example}
Consider the weight matrix $M := \begin{pmatrix} 1 & 0 \\ 0 & 1/4\end{pmatrix}$ and the residual decrease vectors $g := \begin{pmatrix} 2 \\ 1\end{pmatrix}, \widetilde{g} := \begin{pmatrix} 1 \\ 1\end{pmatrix}$.
\Cref{th:two_cc_characterization} \ref{it:two-cc-2} indicates that $T$ must have the singular values $\sigma_1 = 1, \sigma_2 = 2$, which, through formulas \eqref{eq:abd}, yields $T = \begin{pmatrix} 2 & 0 \\ 0 & 1\end{pmatrix}$.
One can easily check that condition \eqref{eq:condition_singular_values} is verified.
As a consequence, \Cref{th:two_cc_characterization} \ref{it:two-cc-1} holds (for any desired spectrum for the matrix $A$).\newline
On the other hand, set $g_1 := 3$ while keeping the rest unchanged. Formulas \eqref{eq:abd} fix $T = \begin{pmatrix} 2 & 1 \\ 0 & 1\end{pmatrix}$, and one can check that \eqref{eq:condition_singular_values} is not verified, which means that the singular values $\sigma_1 = 1, \sigma_2 = 2$ are not enforced on $T$.
Consequently, there exists no system $(A, b)$ such as described in \Cref{th:two_cc_characterization} \ref{it:two-cc-1}.
\end{example}
}

For $n > 2$, the situation is not as clear, and we list below some requirements.

\begin{lemma}[Necessary conditions for \ref{it:two-cc-2} in \red{\Cref{th:two_cc_characterization}}] 
    Let $\sigma_1 \geq , \dots \geq \sigma_n$ be $n$ positive real numbers. Consider two residual decrease vectors $g, \widetilde{g} \in \mathbb R_+^n$ that share the same length $m$. If there exists $T$ of the form \eqref{eq:T_form} such that $g = T \widetilde{g}$ and the singular values of $T$ are the $(\sigma_i)_i$, then
    \begin{enumerate}[label=(\roman*)]
    \item $T_{mm} = \widehat T_{mm} = {g}_m / \widetilde{g}_m$ (recall that $\widehat T$ is the $(1,1)$-block of $T$).

    \item If there is no breakdown, $T$ is triangular and the diagonal values of $T$ are the eigenvalues of $T$.
    \item The first $m$ diagonal values of $T$ are eigenvalues of $T$.
\item The eigenvalues of $T$, denoted by $\xi_1, \dots, \xi_n$ and ordered so that $|\xi_1]\geq |\xi_2| \geq \dots \geq |\xi_n|$ satisfy the Weyl conditions \cite[Problem 7.3.P17]{zbMATH06125590}
\[
|\xi_1 \dots \xi_k| \leq \sigma_1 \dots \sigma_k, \text{for each } k =1,\dots n,
\]
and the inequality is an equality for $k=n$
\[
|\xi_1 \dots \xi_n| = \sigma_1 \dots \sigma_n.
\]
Two special cases are $k=1$ and $k=n$ for which we get
$| \xi_1| \leq \sigma_1 \text{ and } |\xi_{n}| = \frac{|\xi_1 \dots \xi_n|}{|\xi_1 \dots \xi_{n-1}|} \geq   \sigma_n.$
There is also an additive variant of the Weyl conditions
\[
|\xi_1| + \dots + |\xi_k| \leq \sigma_1 + \dots  + \sigma_k, \text{for each } k =1,\dots n.
\]
    \item Matrix $T$ has $\widetilde{g}_m / g_m$ as one of its eigenvalues so
    \[
    \sigma_n \leq \widetilde{g}_m / g_m \leq \sigma_1.
    \]
\item The \cyan{squared}{} Frobenius norm of $T$ satisfies
\[
\sum_{i=1}^n \sum_{j=1}^n |T_{ij}|^2 = \sum_{i=1}^n \sigma_i^2.
\]
\end{enumerate}
\end{lemma}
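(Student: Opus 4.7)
The plan is to dispatch the six items using two basic ingredients: the block upper-triangular structure of $T$ from \eqref{eq:T_form}, and classical relations between eigenvalues and singular values of a matrix. None of the items requires machinery beyond the references already cited in the statement; the proof is essentially a sequence of structural observations, and the items are largely independent.

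For (i), I read off the $m$-th coordinate of $g = T\widetilde{g}$. Since $\widetilde{g}$ has length $m$, its entries beyond index $m$ vanish, giving $g_m = \sum_{j=1}^{m} T_{mj}\widetilde{g}_j$; upper-triangularity of $\widehat{T}$ forces $T_{mj} = 0$ for $j < m$, leaving $g_m = T_{mm}\widetilde{g}_m$, and $\widetilde{g}_m \neq 0$ (because $\widetilde{g}$ is of length exactly $m$) allows me to divide. For (ii) and (iii), I invoke the fact that the spectrum of a block upper triangular matrix equals the union of the spectra of its diagonal blocks. Writing $T$ with diagonal blocks $\widehat{T}$ (size $m \times m$) and $C$ (size $(n-m)\times(n-m)$), the eigenvalues of $T$ are those of $\widehat{T}$ together with those of $C$. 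Upper-triangularity of $\widehat{T}$ then means its eigenvalues are exactly its diagonal entries $T_{11}, \dots, T_{mm}$, proving (iii). In the no-breakdown case $m = n$, the bottom-right block disappears and $T = \widehat{T}$ is fully upper triangular, giving (ii).

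Item (iv) is a direct invocation of the multiplicative and additive Weyl majorant inequalities between singular values and moduli of eigenvalues; the equality at $k = n$ comes from $|\det T|^2 = \det(T^* T) = \prod_i \sigma_i^2$. For (v), I combine (iii) and (i): $T_{mm}$ is a diagonal entry of $\widehat{T}$ and hence an eigenvalue of $T$, and by (i) it equals $g_m/\widetilde{g}_m$; then any eigenvalue $\xi$ of a nonsingular matrix satisfies $\sigma_n \leq |\xi| \leq \sigma_1$, which I verify by applying $|\xi|\|v\| = \|Tv\| \leq \sigma_1 \|v\|$ to an eigenvector $v$ for the upper bound, and the same estimate to $T^{-1}$ (whose largest singular value is $1/\sigma_n$) for the lower bound. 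Finally, (vi) is the classical Frobenius identity $\|T\|_F^2 = \mathrm{tr}(T^* T) = \sum_i \sigma_i^2$; the restricted double sum on the left records precisely the nonzero entries once the freedom in \eqref{eq:T_form} is used to normalize $T$ to fully upper triangular form by, e.g., a unitary rotation of the last $n-m$ basis vectors, which does not affect the nested basis of $A\kry_m(A,b)$.

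The main conceptual subtlety is bookkeeping in the breakdown case $m < n$: the bottom-right block $C$ is a priori unconstrained, and I have to make sure that the additional eigenvalues and singular values it contributes neither invalidate the bounds derived from $\widehat{T}$ alone (handled by the spectral-union structure and by Weyl's inequalities being applied to the full matrix $T$) nor break the Frobenius indexing in (vi) (handled by the normalization step above). Beyond that, each item reduces to a one-line calculation or citation, so I do not anticipate any substantive obstacle.
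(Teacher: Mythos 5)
Your proposal is correct and follows the paper's own route item by item: the block upper-triangular structure of \eqref{eq:T_form} together with the vanishing tail of $\widetilde{g}$ for (i)--(iii), Weyl's majorant inequalities for (iv)--(v), and the trace identity $\|T\|_F^2=\tr(T^*T)=\sum_i\sigma_i^2$ for (vi). The one place you go beyond the paper is item (vi), where you correctly note that the sum restricted to $j\geq i$ only captures $\|T\|_F^2$ after the unconstrained $(2,2)$-block is rotated to upper-triangular form (e.g.\ by an RQ factorization acting on the last $n-m$ coordinates, which preserves $g=T\widetilde g$ and the singular values) --- a point the paper's proof dismisses with ``by definition'' and which is only automatic in the no-breakdown case $m=n$.
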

\begin{proof}
\begin{enumerate}[label=(\roman*)]
   \item In the condition $g = T\widetilde g$, the blocks denoted by $*$ in \eqref{eq:T_form} play no role because the corresponding entries of $g$ and $\widetilde{g}$ are zero. Moreover, the $(1,1)$ block $\widehat T$ of $T$ is upper triangular and $\widetilde{g}_m \neq 0$. 
   \item Straightforward.
   \item The first $m$ diagonal values of $T$ are the diagonal values of $\widehat T$. These are also the eigenvalues of $\widehat T$. Moreover, the eigenvalues of $\widehat T$ are also eigenvalues of $T$ since:
   \[
   \widehat T y = \lambda y \Rightarrow T \begin{pmatrix} y \\ 0 \end{pmatrix} = \lambda  \begin{pmatrix} y \\ 0 \end{pmatrix},
   \]
   where the $0$ block in the eigenvector of $T$ is in $\mathbb R^{n-m}$.
   \item From the literature. 
\item This is a combination of the first, third and fourth items.  
\item By definition.
    \end{enumerate}
\end{proof}

\section{Preconditioned GMRES} \label{sec:preconditioning}

While weighted GMRES may sometimes be considered as a theoretical tool, rather than a practical method to solve real-world problems, preconditioned GMRES is widely employed to enhance the convergence of the standard $I$-GMRES method.
Yet, both are linked, and preconditioned GMRES can be interpreted as weighted GMRES applied to a modified system.
In this section, we explore what our findings on weighted GMRES imply on preconditioned GMRES.

A preconditioner can be applied \emph{on the left} or \emph{on the right}. If both are used, we speak of \emph{split preconditioning}.
We consider this general case directly by denoting a pair of left and right preconditioners as $(H_L,H_R)$. One of them can be set to $I$ to fall back on one-sided preconditioning.
$I$-GMRES preconditioned on the left by $H_L$ and on the right by $H_R$ is defined as $I$-GMRES applied to the linear system
\begin{equation*}
    H_L A H_R u = H_Lb, \qquad x = H_Ru.
\end{equation*}
At the $i$-th iteration, $I$-GMRES produces the iterate $u_i$ (from which the approximate solution $x_i := H_Ru_i$ can be deduced) and the residual $\widetilde{r}_i := b - A H_R u_i$.
Denoting by $H := H_R H_L$ the combined preconditioner, $\widetilde{r}_i$ is characterized by
\begin{equation*}
\|H_L \widetilde{r}_i\|_I = \min_{u\in \kry_i(H_L A H_R, H_L b)} \|H_L b- H_L A H_R u\|_I 
                      = \min_{u\in H_L\kry_i(AH, b)} \|b-AH_Ru\|_{H_L^* H_L}
                      = \min_{x\in H\kry_i(AH, b)} \|b-Ax\|_{H_L^* H_L}.
\end{equation*}
Finally, introducing the variable $v := H^{-1}x$ and the matrix $\widehat{A} := AH$, the characterization rewrites
\begin{equation} \label{eq:min_split}
\|H_L \widetilde{r}_i\|_I = \min_{v\in \kry_i(AH, b)} \|b-AHv\|_{H_L^* H_L}
                      = \min_{v\in \kry_i(\widehat{A}, b)} \|b-\widehat{A}v\|_{H_L^* H_L}.
\end{equation}
This formulation as well as \eqref{eq:minim_pb} show that the $i$-th residual $\widetilde{r}_i$ of $I$-GMRES$(A,b)$ split-preconditioned by $(H_L,H_R)$ is equal to the $i$-th residual of non-preconditioned $(H_L^* H_L)$-GMRES$(\widehat{A},b)$. We say that $I$-GMRES$(A,b)$ split-preconditioned by $(H_L,H_R)$ is \textit{equivalent} to non-preconditioned $(H_L^* H_L)$-GMRES$(\widehat{A},b)$.  
Specializing formulation \eqref{eq:min_split} for special choices of $(H_L,H_R)$ allows us to derive the following equivalent methods:

\begin{enumerate}[label=(\Alph*)]
\item \label{it:spliteq} If $M$ is an hpd matrix that admits the factorization $M = P^* P$, then $M$-GMRES$(A,b)$ is equivalent to $I$-GMRES$(A,b)$ split preconditioned by $(P, P^{-1})$.
\item \label{it:right_prec_eq} $I$-GMRES($A,b$) right preconditioned by $H$ is equivalent to unpreconditioned $I$-GMRES($\widehat{A},b$).
\item \label{it:left_prec_eq} $I$-GMRES($A,b$) left preconditioned by $H$ is equivalent to unpreconditioned $(H^*H)$-GMRES($\widehat{A},b$).
\end{enumerate}
We can also make two fundamental remarks:
\begin{itemize}
\item The choice of preconditioning on the left or on the right, or of split preconditioning does not modify the Krylov space in which the approximate solution $x_i$ is optimized. The minimization space depends only on the combined preconditioner $H = H_R H_L$. 
\item The norm in which the residual $r_i = b - A x_i$ is minimized depends only on the left preconditioner through $H_L^* H_L$. The fact that preconditioning on the right (\textit{i.e.}, setting $H_L = I$) does not modify the minimized norm is a rather well known fact.
\end{itemize} 

\cyan{
\subsection{Left, right and split preconditioned GMRES}
Based on the equivalences above, the results from \Cref{sec:ipsen} can be rewritten for left, right and split preconditioned GMRES. We give below just two such examples that correspond to \eqref{eq:IM-GMRES-easy2} and \eqref{eq:Ipsenfinal} (see also \cite[Sec. 3.1]{spillane2026preconditionleftright}). 
\\
Let the residuals of $I$-GMRES$(A,b)$ right preconditioned by $H$ be denoted by $r_k$ and the 
residuals of $I$-GMRES$(A,b)$ left preconditioned by $H$ be denoted by $\widetilde r_k$ (which means that the residual norm $\|H \widetilde r_k\|_I$ is actually produced). The left/right preconditioned version of \eqref{eq:IM-GMRES-easy} is  
\begin{equation}\label{eq:prec-GMRES-easy2}
 \| r_k \|_I  \leq \| \widetilde{r}_k\|_I \leq \kappa(H)  \| r_k \|_I \text{ and }  \|H\widetilde r_k\|_I \leq \|H r_k\|_I \leq  \kappa(H)  \|H\widetilde  r_k\|_I, 
\end{equation}
where $\kappa(H) := \|H\|_I\|H^{-1}\|_I = \sqrt{\kappa(H^* H)}$ is the condition number of $H$. 
\\
The more involved bound is 
\begin{equation}
\label{eq:precIpsenfinal}
\frac{ \|\widetilde r_k\|_N}{ \| r_k\|_N} = \frac{\| B (B^* H^* H B )^{-1} e_1\|_N}{ \| B (B^* B )^{-1} e_1\|_N }\frac{\| B (B^*B )^{-1} e_1\|_I^2}{\|H B (B^* H^* H B )^{-1} e_1\|_I^2},
\end{equation}
where $N$ is any hpd matrix,  $B = (b \,\,\, W_k)$ is the concatenation of the right-hand-side with any matrix $W_k \in \mathbb C^{n\times k}$ whose columns form a basis of $AH \kry_k(AH,b)$, and $e_1 = (1,0, \dots, 0)^\top \in \mathbb C^{k+1}$.
}

\subsection{Prescribed convergence curves}

We now turn to extending the results of \Cref{sub:PCC} to preconditioned GMRES. Following \eqref{eq:min_split} and the subsequent remarks, we naturally extend the concept of realizing a convergence curve to preconditioned GMRES. 

\cyan{
\begin{definition}[Realization of a convergence curve for preconditioned GMRES]
\label{def:PCCinducediprec}
Let $g \in \resdecrease$ and let $p$ be its length. Let $(A,b)$ define a linear system. 
Consider $I$-GMRES$(A,b)$ split preconditioned by $(H_L,H_R)$, producing $x_i$ as the $i$-th approximate solution.
Denoting $r_i := b-Ax_i$, we say that the preconditioned method realizes the convergence curve induced by $g$ if
\[
p = \dim \kry_n(A, b) \qquad\text{ and }\qquad g_j = \sqrt{\|H_L r_{j-1}\|_I^2 - \|H_L r_j\|_I^2} \qquad \forall j = 1,\dots, p, 
\]
or, equivalently, if
\begin{equation} 
\|H_L r_i \|_I = \sqrt{\sum_{j=i+1}^n g_j^2} \qquad \forall i=0,\dots,n.  
\end{equation}
\end{definition}
}

As a direct consequence of item \ref{it:spliteq} above, \Cref{th:there_exists_M} implies that

\begin{theorem}[Split preconditioning] \label{th:split_precond}
    Let $(A, b)$ define a given linear system, and consider $\widetilde{g}\in\resdecrease$ a residual decrease vector.
    If $\widetilde{g}$ is of length $m:=\dim \kry_n(A, b)$, then
    there exists a split preconditioner such that $I$-GMRES($A,b$) realizes the convergence curve induced by $\widetilde{g}$.
\end{theorem}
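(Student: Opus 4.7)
The plan is to directly invoke \Cref{th:there_exists_M} and exploit the correspondence between $M$-GMRES and split-preconditioned $I$-GMRES established in the paragraph just preceding the statement (equations \eqref{eq:M_cholesky}--\eqref{eq:min_M}). In essence, the theorem is a reformulation of \Cref{th:there_exists_M} in the language of split preconditioning, so the substantive work has already been done.

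First, I would apply \Cref{th:there_exists_M} to the linear system $(A,b)$ and the prescribed residual decrease vector $\widetilde{g}$. Since $\widetilde{g}$ has length $m=\dim\kry_n(A,b)$, the hypothesis is met and the theorem yields a hpd matrix $M$ such that $M$-GMRES$(A,b)$ realizes the convergence curve induced by $\widetilde{g}$. I would then form the Cholesky factorization $M = PP^*$ and define the split preconditioner $(H_L, H_R) := (P^*, P^{-*})$; this is well-defined because $P$ is invertible, and the combined preconditioner is $H := H_R H_L = P^{-*}P^* = I$.

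Because $H = I$, the Krylov space $\kry_i(HA, Hb) = \kry_i(A,b)$ coincides with the one explored by $M$-GMRES, and the minimization problem \eqref{eq:min_split} for the split-preconditioned algorithm reads
\[
\min_{x\in \kry_i(A,b)} \|H_L(b-Ax)\|_I = \min_{x\in \kry_i(A,b)} \|P^*(b-Ax)\|_I = \min_{x\in \kry_i(A,b)} \|b-Ax\|_M,
\]
which is precisely \eqref{eq:min_M}. Hence the iterates (and residuals $r_i = b - Ax_i$) produced by $I$-GMRES$(A,b)$ with the split preconditioner $(P^*, P^{-*})$ are identical to those of $M$-GMRES$(A,b)$.

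Finally, recalling the convention highlighted in bold before the statement — namely that for a split preconditioner the prescribed residual norm is $\|H_L r_i\|_I$ — I would conclude by observing that $\|H_L r_i\|_I = \|P^* r_i\|_I = \|r_i\|_M$, so the realized convergence curve in the split-preconditioned sense coincides with the convergence curve realized by $M$-GMRES$(A,b)$, which is the curve induced by $\widetilde{g}$. There is no genuine obstacle in this argument; the only minor point deserving care is to check the consistency of conventions (the norm in which the residual is measured in the split-preconditioned setting versus the weighted setting), which is resolved by the identity $\|H_L r\|_I = \|r\|_M$.
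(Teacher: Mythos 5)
Your proposal is correct and follows essentially the same route as the paper: invoke \Cref{th:there_exists_M} to obtain $M$, factor $M=PP^*$, and set $(H_L,H_R)=(P^*,P^{-*})$, using the equivalence \eqref{eq:min_M}--\eqref{eq:min_split} already established in the text. The only cosmetic difference is that the paper uses the explicit factor $P=(WT)^{-*}$ coming from the construction of $M$ in \Cref{th:there_exists_M} rather than a generic Cholesky factor, which changes nothing since the argument only requires $M=PP^*$ for some invertible $P$.
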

\begin{proof}
{    By \Cref{th:there_exists_M}, there exists $M$ hpd such that $M$-GMRES realizes the convergence curve. Let $M = P^* P$ be the Cholesky factorization of $M$. The prescribed residual norms are the $\|r_i\|_M = \| P r_i\|_I$. By item \ref{it:spliteq} above we conclude that $I$-GMRES preconditioned by $(P,P^{-1})$ also realizes the convergence curve (in the sense of \Cref{def:PCCinducediprec}).} 
\end{proof}

In the following result, we show that it is possible to simultaneously prescribe the convergence curves for the GMRES algorithm preconditioned on the left and on the right. 

\begin{theorem}[Any two convergence curves are simultaneously possible] \label{th:left_right_prec}
    Consider the convergence curves induced by $g_L$ and $g_R\in\resdecrease$ of same length.\newline
    There exists a system $(A,b)$ and a preconditioner $H$ such that $I$-GMRES$(A,b)$ right preconditioned by $H$ realizes $g_R$, and $I$-GMRES$(A,b)$ left preconditioned by $H$ realizes $g_L$. Additionally, the eigenvalues of $AH$ \cyan{(or, equivalently, of $HA$)}{} can be prescribed.
\end{theorem}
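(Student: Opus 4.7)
The plan is to reduce \Cref{th:left_right_prec} to the already-established result for weighted versus unweighted GMRES, namely \Cref{cor:weight-simultaneous}, via the equivalences \eqref{eq:min_left_prec}--\eqref{eq:min_right_prec} derived just above the statement.

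First, I would recast the problem in terms of unpreconditioned GMRES acting on $\widehat{A} := AH$. By \eqref{eq:min_right_prec}, $I$-GMRES$(A,b)$ right preconditioned by $H$ is equivalent to unpreconditioned $I$-GMRES$(\widehat{A},b)$, and by \eqref{eq:min_left_prec}, $I$-GMRES$(A,b)$ left preconditioned by $H$ is equivalent to unpreconditioned $(H^*H)$-GMRES$(\widehat{A},b)$. Both reformulations live in the same Krylov space $\kry_i(\widehat{A},b)$, so prescribing the two convergence curves is equivalent to prescribing the $I$-convergence curve and the $(H^*H)$-convergence curve of a single unpreconditioned system on $\widehat{A}$.

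Second, I would invoke \Cref{cor:weight-simultaneous} with the roles $g := g_R$ and $\widetilde{g} := g_L$, prescribing as eigenvalues the set $\{\lambda_i\}_i$ intended for $AH$. This yields a non-singular matrix $\widehat{A}$, a right-hand side $b$, and an hpd weight matrix $M$ such that $I$-GMRES$(\widehat{A},b)$ realizes $g_R$ and $M$-GMRES$(\widehat{A},b)$ realizes $g_L$, with the spectrum of $\widehat{A}$ equal to the prescribed $\{\lambda_i\}_i$.

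Third, I would extract $H$ from $M$. Since $M$ is hpd, take its Cholesky factorization $M = LL^*$ with $L$ invertible, and set $H := L^*$, so that $H^*H = M$ and $H$ is non-singular. Defining $A := \widehat{A} H^{-1}$ then gives a non-singular matrix satisfying $AH = \widehat{A}$, matching both the prescribed spectrum and (via the equivalences of the first step) the two prescribed convergence curves.

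The main obstacle is essentially absent here: the substantive content has already been absorbed into \Cref{th:there_exists_M}/\Cref{cor:weight-simultaneous}. The only things to check are that the Cholesky factor $H$ is invertible (immediate from $M$ being hpd) and that $A = \widehat{A}H^{-1}$ inherits non-singularity from $\widehat{A}$ and $H$, which is clear. Any cosmetic choice (e.g.\ replacing the Cholesky factor by another square root of $M$) would work equally well.
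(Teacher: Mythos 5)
Your proof is correct and follows essentially the same route as the paper: reduce to the weighted-versus-unweighted setting via the equivalences \eqref{eq:min_left_prec}--\eqref{eq:min_right_prec}, obtain $(\widehat{A},b)$ and an hpd $M$ realizing both curves with the prescribed spectrum, then factor $M = H^*H$ and set $A := \widehat{A}H^{-1}$. The only (immaterial) difference is that you cite \Cref{cor:weight-simultaneous} (building the system first, then $M$), whereas the paper constructs $T$ and $M$ first and then invokes \Cref{th:two_cc_characterization}.
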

\begin{proof}
 By \Cref{cor:weight-simultaneous}, there exists a system $(\widehat A,b)$ and an hpd matrix $M$ such that $I$-GMRES$(\widehat A,b)$ realizes $g_R$, and $M$-GMRES$(\widehat A,b)$ realizes $g_L$. Additionally, the eigenvalues of $\widehat A$ can be prescribed.
 Since $M$ is hpd, there exists $H$ such that
       $ M = H^*H$.
    We moreover define
    $    A := \widehat{A}H^{-1}$.

    Referring to \ref{it:right_prec_eq}, $I$-GMRES($\widehat{A},b$) is equivalent to $I$-GMRES($A,b$) right preconditioned by $H$, which therefore realizes $g_R$. Similarly, referring to \ref{it:left_prec_eq}, $M$-GMRES($\widehat{A},b$) is equivalent to $I$-GMRES($A,b$) left preconditioned by $H$, which therefore realizes $g_L$. Additionally, the eigenvalues of $AH$ (which are also the eigenvalues of $HA$) can be prescribed.
\end{proof}

The following result translates \Cref{th:allMs} into the preconditioning formalism.

\begin{theorem}[Characterization of the left preconditioner]
\label{th:allHs}
    Consider $(A,b)$ and $H$ a given linear system and a preconditioner. Let $\widehat A := AH$, $m := \dim \kry_n(\widehat A, b)$, and $W\in\mathbb{C}^{n\times n}$ be a unitary matrix whose first $m$ columns form a nested basis of $\widehat A\kry_m(\widehat A,b)$. Consider the residual decrease vector ${g_L}\in\resdecrease$ of length $m$.
 The two following statements are equivalent:  
    \begin{enumerate}[label=(\roman*)]
    \item $I$-GMRES$(A,b)$ preconditioned on the left by $H$ realizes the convergence curve induced by $g_L$.
    \item There exists $T\in\mathbb{C}^{n\times n}$ of the form \eqref{eq:T_form} such that
    \begin{equation*}
        (H^* H)^{-1} = W T (WT)^*
        \quad\text{ and }\quad
        b = WTg_L.
    \end{equation*}
    \end{enumerate}
\end{theorem}
\begin{proof}
By applying \Cref{th:allMs}, to the linear system $(\widehat A, b)$ and the weight $H^* H$, the two following statements are equivalent:   
\begin{enumerate}[label=(\roman*)]
    \item $(H^*H) $-GMRES$(\widehat A,b)$ realizes the convergence curve induced by $g_L$.
    \item There exists $T\in\mathbb{C}^{n\times n}$ of the form \eqref{eq:T_form} such that
    \begin{equation*}
        (H^*H)^{-1} = W T (WT)^*
        \quad\text{ and }\quad
        b = WT g_L.
    \end{equation*}
    \end{enumerate}
The proof is completed by recalling item~\ref{it:left_prec_eq} in the list of equivalences.  
\end{proof}

The following result is a corollary of \Cref{th:left_right_prec}: for any given system and preconditioner, regardless of the convergence curves achieved by left and right preconditioned GMRES, there exists another preconditioned system with the same spectrum that exhibits the opposite behavior.
This implies that, when considering all possible pairs of systems and preconditioners, neither left nor right preconditioning is inherently superior to the other.
We give a constructive proof using \Cref{th:allHs}.

{
\begin{corollary} \label{th:inverse_curves}
    Let $(A,b)$ define a given linear system, and $H$ a preconditioner.
    Denote by $g_L$ the residual decrease vector realized by $I$-GMRES$(A,b)$ preconditioned by $H$ on the left, and 
by $g_R$ the residual decrease vector realized by $I$-GMRES$(A,b)$ preconditioned by $H$ on the right.
\newline
    There exists a system $(\widetilde{A},\widetilde{b})$ and a preconditioner $\widetilde{H}$ such that: 
\begin{itemize}
\item $I$-GMRES$(\widetilde{A},\widetilde{b})$ realizes $g_R$ when preconditioned by $\widetilde{H}$ on the left, and $g_L$ when preconditioned by $\widetilde{H}$ on the right; 
\item  $\widetilde{A}\widetilde{H}$ has the same spectrum as $AH$.
\end{itemize}
\end{corollary}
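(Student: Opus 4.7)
The plan is to translate the problem back into the weighted GMRES setting via the equivalences \eqref{eq:min_left_prec}--\eqref{eq:min_right_prec}, invert the upper triangular change-of-basis matrix $T$ that relates $g_L$ to $g_R$ in the original setup, and then rebuild a new system with the prescribed spectrum by invoking \Cref{th:existenceAb}.

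First, I would set $\widehat{A} := AH$ so that right preconditioned $I$-GMRES$(A,b)$ becomes $I$-GMRES$(\widehat{A},b)$ (realizing $g_R$) and left preconditioned GMRES becomes $(H^*H)$-GMRES$(\widehat{A},b)$ (realizing $g_L$). Applying \Cref{th:allHs} (equivalently \Cref{th:allMs}) to $(\widehat{A},b)$ produces an $I$-unitary matrix $W$ whose first $m$ columns form a nested basis of $\widehat{A}\kry_m(\widehat{A},b)$ together with a matrix $T$ of the form \eqref{eq:T_form} such that $(H^*H)^{-1} = WT(WT)^*$ and $b = WTg_L$. Comparison with $b = Wg_R$ yields the key identity $g_R = Tg_L$.

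Next, I would set $\widetilde{T} := T^{-1}$. A direct block inversion of \eqref{eq:T_form} confirms that $\widetilde{T}$ is again of that form, with $(1,1)$-block $\widehat{T}^{-1}$ upper triangular and invertible. By \Cref{th:existenceAb} applied to $g_L$ with the eigenvalues of $AH$ as the prescribed spectrum, there exists a system $(\widetilde{\widehat{A}},\widetilde{b})$ of breakdown index $m$ whose matrix has the desired eigenvalues and for which $I$-GMRES realizes $g_L$. Picking (by \Cref{th:unicityONB}) any $I$-unitary matrix $\widetilde{W}$ whose first $m$ columns form a nested basis of $\widetilde{\widehat{A}}\kry_m(\widetilde{\widehat{A}},\widetilde{b})$ gives $\widetilde{b} = \widetilde{W}g_L$. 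I would then define $\widetilde{M} := (\widetilde{W}\widetilde{T}(\widetilde{W}\widetilde{T})^*)^{-1}$, factor it as $\widetilde{M} = \widetilde{H}^*\widetilde{H}$, and set $\widetilde{A} := \widetilde{\widehat{A}}\widetilde{H}^{-1}$.

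Property (ii) is then immediate since $\widetilde{A}\widetilde{H} = \widetilde{\widehat{A}}$ inherits the spectrum of $AH$. For (i), right preconditioning of $\widetilde{A}$ by $\widetilde{H}$ reduces to $I$-GMRES$(\widetilde{\widehat{A}},\widetilde{b})$ and realizes $g_L$ by construction; left preconditioning reduces to $\widetilde{M}$-GMRES$(\widetilde{\widehat{A}},\widetilde{b})$, and since $\widetilde{T}g_R = T^{-1}Tg_L = g_L$ we obtain $\widetilde{b} = \widetilde{W}\widetilde{T}g_R$, so \Cref{th:allHs} applied to $(\widetilde{A},\widetilde{b},\widetilde{H})$ with the matrix $\widetilde{T}$ confirms that $g_R$ is realized. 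The main obstacle I expect is the bookkeeping around the block form \eqref{eq:T_form} when $m<n$: one must check carefully that $T^{-1}$ retains that form (handled by the block inversion above), and that the breakdown index of the new system $(\widetilde{\widehat{A}},\widetilde{b})$ indeed equals $m$, so that $\widetilde{W}$ can be chosen with the required nestedness property.
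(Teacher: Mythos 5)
Your proof is correct, but it takes a noticeably heavier route than the paper's. You rebuild the preconditioned system from scratch: you extract $T$ with $g_R = Tg_L$ from the given pair via \Cref{th:allMs}, invert it, invoke \Cref{th:existenceAb} to manufacture a fresh system $(\widetilde{\widehat{A}},\widetilde{b})$ with the spectrum of $AH$ realizing $g_L$, and only then construct $\widetilde{M}$ and $\widetilde{H}$. The paper instead recycles the original data entirely: it sets $\widetilde{b}:=Hb$ and arranges $\widetilde{A}\widetilde{H}=HA$ exactly (which is similar to $AH$, hence isospectral), so that the only new object to construct is the preconditioner $\widetilde{H}:=T^{-1}W^*$, with $W$ the nested orthonormal basis of the left-preconditioned system and $T$ the explicit matrix from the proof of \Cref{th:there_exists_M} satisfying $g_L=Tg_R$; no appeal to \Cref{th:existenceAb} is needed, and the bookkeeping you flag (breakdown index of the new system, invertibility of the block form under inversion) disappears. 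What your detour buys is a slightly stronger conclusion — since you pass through \Cref{th:existenceAb} you could prescribe an arbitrary nonzero spectrum for $\widetilde{A}\widetilde{H}$ rather than merely matching that of $AH$ — at the price of a less explicit construction. All the steps you do take are sound: the choice of $W$ with $b=Wg_R$ is justified by \Cref{lem:characterization} and \Cref{th:unicityONB}, $T^{-1}$ does retain the form \eqref{eq:T_form}, and $g_L$ and $g_R$ share the common length $m=\dim\kry_n(AH,b)$ since the Krylov residual spaces depend only on the product $AH$.
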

}
\begin{proof}
Let $\widetilde{H}$ be any non-singular matrix, and set $\widetilde{A} := HA\widetilde{H}^{-1}$ so that $\widetilde{A}\widetilde{H} = HA$. $I$-GMRES$(\widetilde{A},Hb)$ right preconditioned by $\widetilde{H}$ realizes $g_L$.  
Indeed, $I$-GMRES$(A,b)$ preconditioned by $H$ on the left realizes $g_L$ and is defined as unpreconditioned $I$-GMRES$(HA,Hb)$ which, in turn, is equivalent to $I$-GMRES$(HA\widetilde{H}^{-1},Hb)$ preconditioned on the right by $\widetilde{H}$ (through \ref{it:right_prec_eq}).
\\
It remains to choose $\widetilde{H}$ so that  $I$-GMRES$(\widetilde{A},Hb)$ left preconditioned by $\widetilde{H}$ realizes $g_R$. 
Unpreconditioned $I$-GMRES$(HA,Hb)$ realized $g_L$ so, by \Cref{lem:characterization}, there exists a unitary matrix $W \in \mathbb C^{n\times n}$ such that the first columns in $W$ form a nested basis of $HA\kry_n(HA,Hb)$ and
$H b = W g_L$.
\\
Moreover, there exists $T\in\mathbb{C}^{n\times n}$ of the form \eqref{eq:T_form} such that $g_L = T g_R$ (by the same construction as in the proof of \Cref{th:there_exists_M} knowing that $g_R$ and $g_L$ are of the same length by definition). Now, we have
$H b = W T g_R$.
If we finally set $ \widetilde{H} := (WT)^{-1}$, then $({\widetilde H}^* \widetilde{H})^{-1} = W T (WT)^*$ and we conclude by \Cref{th:allHs} that GMRES$(\widetilde A, H b)$ preconditioned on the left by  $ \widetilde{H}$ realized $g_R$. 
\end{proof}

We end with the preconditioned GMRES version of \Cref{th:two_cc_characterization}.

\begin{theorem}[Simultaneous prescription of two convergence curves] \label{th:left_right_two_cc_characterization}
    Let $H\in\mathbb{C}^{n\times n}$ be a non-singular matrix, and consider the residual decrease vectors $g_L$ and $g_R\in\resdecrease$. We assume that $g_R$ and $g_L$ have the same length. Let $\lambda_1$,\dots, $\lambda_n$ be $n$ non-zero complex numbers. 
    The two following statements are equivalent:
    \begin{enumerate}[label=(\roman*)]
    \item \label{it:one} There exists a system $(A,b)$ such that the eigenvalues of $A H$ are the $(\lambda_i)_i$, $I$-GMRES($A,b$) preconditioned on the right by $H$ realizes $g_R$, and $I$-GMRES($A,b$) preconditioned on the left by $H$ realizes $g_L$.
    \item \label{it:two} There exists a non-singular matrix $T$ of the form \eqref{eq:T_form} such that $g_R = T g_L$ and the singular values of $T$ are $(1/\mu_i)_i$ with $(\mu_i)_i$ denoting the singular values of $H$.
    \end{enumerate}
\end{theorem}
The eigenvalues $\lambda_i$ do not play a role in the second assertion, meaning that they can be independently prescribed.
\begin{proof}
Statement \ref{it:one} is equivalent to 
    \begin{enumerate}[label=\textit{(iii)}]
    \item \label{it:three} There exists a system $(A,b)$ such that the eigenvalues of $A H$ are the $(\lambda_i)_i$, $I$-GMRES($AH,b$) realizes $g_R$ and $(H^* H)$-GMRES($A H,b$) realizes $g_L$.
    \end{enumerate}
We next prove that assertions \textit{\ref{it:two}} and \ref{it:three} are equivalent.
    Let $M = H^* H$. The eigenvalues of $M$ are the squared singular values of $H$, which we denote by $(\mu_i^2)_i$. Next, we apply \Cref{th:two_cc_characterization}, which states that item \textit{\ref{it:two}} is equivalent to: 
     there exists a system $(\widehat A,b)$ such that the eigenvalues of $\widehat A$ are the $(\lambda_i)_i$, $I$-GMRES($\widehat A,b$) realizes $g_R$ and $M$-GMRES($\widehat A,b$) realizes $g_L$. It simply remains to set $A := \widehat A H^{-1}$ to recover \ref{it:three}.
\end{proof}

The necessary and sufficient condition in \Cref{th:left_right_two_cc_characterization} has been analyzed in \Cref{sub:analysisth14}. 

From these results, we can make the following observation: there exist preconditioned systems where the choice of left or right preconditioning leads to significantly different GMRES convergence behaviours. For instance, left preconditioning may result in fast convergence, while right preconditioning may lead to slow convergence, even if in both cases, the eigenvalues of the preconditioned system are perfectly clustered. 
This observation reinforces what is already known: GMRES convergence is not solely determined by the eigenvalues. 
It also highlights that in certain cases, the choice of preconditioning on the left or on the right may not be trivial and lead to significant differences.

\section{Illustrations} \label{sec:illustrations}

The purpose of the following experiments is to gain insight on the links between the convergence curves of $I$- and $M$-GMRES and the weight matrix $M$. 
 The experiments are run in MATLAB, and the scripts are openly available\footnote{\url{https://github.com/hpc-maths/2025_prescribed_gmres}}.
 Previous numerical comparisons can be found in \cite{embree_weighted_2017,essai_weighted_1998,guttel_observations_2014} for restarted GMRES, and \cite{pestana_choice_2013,embree2025extendingelmansboundgmres} for particular choices of the weight that arise from analyzing GMRES convergence.

A word of warning about the following results is that we only plot normalized residuals{, so that all convergence curves start from the same initial point}. This is the same as scaling $b$ so that $\|b\|_I = 1$ and scaling $M$ so that $\|b\|_M = 1$ also. For this reason, only the \cyan{relative distribution} of eigenvalues of $M$ is significant. 
Moreover, we consider for simplicity that all prescribed convergence curves are of length $n$ (\emph{i.e.} no early breakdown), so that all matrices $T$ of the form \eqref{eq:T_form} are triangular.

\subsection{Fixed spectrum for $M$ and convergence curve for $I$-GMRES, variable $T$}


We place ourselves in the formalism of \Cref{th:two_cc_characterization}. 
Our focus is on examining how the convergence of $M$-GMRES is affected by prescribing the spectrum of $M$. 
Recall that for a given $I$-GMRES residual decrease vector $g\in\resdecrease$ and a given spectrum $\mu\in\nonNegativeReals^n$ for $M$, the possible convergence curves for $M$-GMRES correspond to the set of all $\widetilde{g} := T^{-1}g$, such that the singular values of $T$ are the $(1/\sqrt{\mu_i})_i$, which means that the singular values of $T^{-1}$ are the $(\sqrt{\mu_i})_i$.

The experimental setup of this section is then the following: given a prescribed convergence curve for $I$-GMRES and a prescribed spectrum $\mu$ for $M$, we
\begin{enumerate}
    \item construct the residual decrease vector $g$ by \Cref{def:resdecvec},
    \item randomly generate a matrix $T^{-1}$ with singular values $(\sqrt{\mu_i})_i$ (see below),
    \item compute $\widetilde{g} := T^{-1}g$,
    \item recover the residuals norms realized by $M$-GMRES through formula \eqref{eq:r_from_g}.
\end{enumerate}

In practice, to randomly generate a triangular matrix $T^{-1}$ with prescribed singular values, we apply the following procedure:
(i) generate a random orthogonal matrix $V$ by performing the QR factorization of a general matrix with randomly generated coefficients; (ii) define $T^{-1}$ as the triangular matrix in the QR factorization of $\Sigma V$, where $\Sigma$ denotes the diagonal matrix holding the prescribed singular values. 

Browsing through all possible $T^{-1}$ matrices spans all possible convergence curves for $M$-GMRES.
However, in practice, randomizing the coefficients of $T^{-1}$ by this procedure does not produce the complete variety of possible convergence behaviors.
Indeed, first, the initial random matrix is drawn from a uniform distribution, which inherently restricts the diversity of resulting matrices (and thus, the range of orthogonal matrices $V$ that can be obtained). Second, the QR factorization of $\Sigma V$ produces a fully populated triangular matrix, excluding structurally sparse forms (such as diagonal matrices).
Despite these limitations, we examine the convergence curves generated by this method as representative examples within the broader landscape of $M$-GMRES behavior.

In the following experiments, we consider real matrices and we set $n=20$.

\paragraph{Experiment 1 (one eigenvalue jump)}
The spectrum of $M$ is chosen to contain two distinct eigenvalues, 1 and $10^{12}$, with respective multiplicities $(n-k)$ and $k$, with $k=8$.
\Cref{fig:experiment1} displays convergence curves of $M$-GMRES obtained from a sample of 5 random matrices $T^{-1}$.
Each subfigure corresponds to a prescribed convergence curve for $I$-GMRES: (a) complete stagnation until breakdown; (b) linear decay of the residual in logarithmic scale; (c) general, irregular decrease of the residual.

We observe that all samples initially follow the convergence pattern of $I$-GMRES. 
Subsequently, they exhibit a sudden and significant jump before resuming the residual decrease characteristic of $I$-GMRES.
According to formula~\eqref{eq:link_nomalized_residuals}, the magnitude of this jump is bounded by $\sqrt{\kappa(M)}$, here $10^6$. 
The plots indicate that this upper bound is closely approached.
Notably, the jump occurs at iteration 8, which aligns with the order of multiplicity $k$ of the large eigenvalue. 
Experiments with varying values of $k$ confirm that the jump consistently occurs at iteration $k$. This can be understood by recalling that $T^{-1}$ is the triangular matrix in the QR factorization of $\Sigma V$, a matrix that has $8$ rows that are of much larger magnitude than the others. 

Additionally, when the spectrum of $M$ consists not of two distinct eigenvalues but of two clusters of eigenvalues randomly distributed around the same two values, the results remain qualitatively unchanged.
Note also that permutations of the vector $\mu$ does not change the results either.

\begin{figure}
  \begin{center}
  \begin{subfigure}[b]{0.49\textwidth} 
    \begin{tikzpicture}[scale=0.7]
      \begin{semilogyaxis}[
          width=6.8cm,
          height=6cm,
          ymajorgrids,
          xlabel={Iterations},
          ylabel={residual},
          xmin=0, xmax=20,
          legend cell align=left,
          legend style={at={(0.97,0.7)}, anchor=north east}
        ]
        \addplot [color=black, mark=*, mark size=1.5pt] table [x expr=\coordindex, y index=0]{data/rW__stagnate__two_distinct.txt};
        \addlegendentry{$I$-GMRES}
        \foreach \i in {1,...,5} {
            \addplot+ [no markers] table [x expr=\coordindex, y index=\i] {data/rW__stagnate__two_distinct.txt};
        }
        \addlegendentry{$M$-GMRES}
      \end{semilogyaxis}
      \end{tikzpicture}
      \caption{The $I$-GMRES convergence curve is set to complete stagnation.}
        \label{fig:experiment1:stagnation}
    \end{subfigure}
    \hfill
    \begin{subfigure}[b]{0.49\textwidth} 
        \centering
        \begin{tikzpicture}[scale=0.7]
      \begin{semilogyaxis}[
          width=6.8cm,
          height=6cm,
          ymajorgrids,
          xlabel={Iterations},
          ylabel={residual},
          xmin=0, xmax=20,
          legend cell align=left,
          legend pos=outer north east,
        ]
        \addplot [color=black, mark=*, mark size=1.5pt] table [x expr=\coordindex, y index=0] {data/rW__linear_decay__two_distinct.txt};
        \foreach \i in {1,...,5} {
            \addplot+ [no markers] table [x expr=\coordindex, y index=\i] {data/rW__linear_decay__two_distinct.txt};
        }
      \end{semilogyaxis}
      \end{tikzpicture}
      \caption{The $I$-GMRES convergence curve is set to linear decay in log scale.}
        \label{fig:experiment1:linear}
    \end{subfigure}
    \vskip\baselineskip
    \begin{subfigure}[b]{0.49\textwidth} 
        \centering
        \begin{tikzpicture}[scale=0.7]
      \begin{semilogyaxis}[
          width=6.8cm,
          height=6cm,
          ymajorgrids,
          xlabel={Iterations},
          ylabel={residual},
          xmin=0, xmax=20,
          legend cell align=left,
          legend pos=outer north east,
        ]
        \addplot [color=black, mark=*, mark size=1.5pt] table [x expr=\coordindex, y index=0] {data/rW__irregular__two_distinct.txt};
        \foreach \i in {1,...,5} {
            \addplot+ [no markers] table [x expr=\coordindex, y index=\i] {data/rW__irregular__two_distinct.txt};
        }
      \end{semilogyaxis}
      \end{tikzpicture}
      \caption{The $I$-GMRES convergence curve is set to a general irregular decrease.}
        \label{fig:experiment1:irregular}
    \end{subfigure}
    \caption{Experiment 1. Convergence curves of $M$-GMRES for a sample of random $T^{-1}$ matrices. $M$ has two distinct eigenvalues with large gap: 1 and $10^{12}$, with respective multiplicities 12 and 8. \cyan{In the plots, the convergence curve of $I$-GMRES is represented in black with dot markers, while all other lines (colored, without markers) represent those of various instances of $M$-GMRES.}}
    \label{fig:experiment1}
  \end{center}
\end{figure}

\paragraph{Experiment 2 (two eigenvalue jumps)}

The spectrum of $M$ is chosen to contain three distinct eigenvalues: 1, $10^{10}$ and $10^{20}$, with respective multiplicities $(n-k_1-k_2)$, $k_1$ and $k_2$, with $k_1:=7, k_2:=4$.
\Cref{fig:experiment2} illustrates the results under the same experimental conditions as Experiment 1.
The plots are similar, except that two jumps are observed due to the two jumps in the eigenvalues.
In this scenario, the combined magnitude of both jumps is theoretically constrained by $\sqrt{\kappa(M)} = 10^{10}$, which is also closely approached in this case.
Concerning the location of the jumps: the first jump occurs at iteration $k_2=4$, which corresponds to the order of multiplicity of the largest eigenvalue; the second jump occurs $k_1$ iterations later, which corresponds to the order of multiplicity of the second-largest eigenvalue, here $11$.

\begin{figure}
  \begin{center}
  \begin{subfigure}[b]{0.49\textwidth} 
    \begin{tikzpicture}[scale=0.7]
      \begin{semilogyaxis}[
          width=6.8cm,
          height=6cm,
          ymajorgrids,
          xlabel={Iterations},
          ylabel={residual},
          xmin=0, xmax=20,
          legend cell align=left,
          legend style={at={(0.97,0.8)}, anchor=north east}
        ]
        \addplot [color=black, mark=*, mark size=1.5pt] table [x expr=\coordindex, y index=0]{data/rW__stagnate__three_distinct.txt};
        \addlegendentry{$I$-GMRES}
        \foreach \i in {1,...,5} {
            \addplot+ [no markers] table [x expr=\coordindex, y index=\i] {data/rW__stagnate__three_distinct.txt};
        }
        \addlegendentry{$M$-GMRES}
      \end{semilogyaxis}
      \end{tikzpicture}
      \caption{$I$-GMRES: complete stagnation.}
        \label{fig:experiment2:stagnation}
    \end{subfigure}
    \hfill
    \begin{subfigure}[b]{0.49\textwidth} 
        \centering
        \begin{tikzpicture}[scale=0.7]
      \begin{semilogyaxis}[
          width=6.8cm,
          height=6cm,
          ymajorgrids,
          xlabel={Iterations},
          ylabel={residual},
          xmin=0, xmax=20,
          legend cell align=left,
          legend pos=outer north east,
        ]
        \addplot [color=black, mark=*, mark size=1.5pt] table [x expr=\coordindex, y index=0] {data/rW__linear_decay__three_distinct.txt};
        \foreach \i in {1,...,5} {
            \addplot+ [no markers] table [x expr=\coordindex, y index=\i] {data/rW__linear_decay__three_distinct.txt};
        }
      \end{semilogyaxis}
      \end{tikzpicture}
      \caption{$I$-GMRES: linear decay in log scale}
        \label{fig:experiment2:linear}
    \end{subfigure}
    \vskip\baselineskip
    \begin{subfigure}[b]{0.49\textwidth} 
        \centering
        \begin{tikzpicture}[scale=0.7]
      \begin{semilogyaxis}[
          width=6.8cm,
          height=6cm,
          ymajorgrids,
          xlabel={Iterations},
          ylabel={residual},
          xmin=0, xmax=20,
          legend cell align=left,
          legend pos=outer north east,
        ]
        \addplot [color=black, mark=*, mark size=1.5pt] table [x expr=\coordindex, y index=0] {data/rW__irregular__three_distinct.txt};
        \foreach \i in {1,...,5} {
            \addplot+ [no markers] table [x expr=\coordindex, y index=\i] {data/rW__irregular__three_distinct.txt};
        }
      \end{semilogyaxis}
      \end{tikzpicture}
      \caption{$I$-GMRES: general irregular decrease}
        \label{fig:experiment2:irregular}
    \end{subfigure}
    \caption{Experiment 2. Convergence curves of $M$-GMRES for a sample of random $T^{-1}$ matrices. $M$ has three distinct eigenvalues with large gap: 1, $10^{10}$, $10^{20}$, with respective multiplicities 9, 7 and 4. \cyan{In the plots, the convergence curve of $I$-GMRES is represented in black with dot markers, while all other lines (colored, without markers) represent those of various instances of $M$-GMRES.}}
    \label{fig:experiment2}
  \end{center}
\end{figure}

\paragraph{Experiment 3 (uniform distribution)}

The spectrum of $M$ is uniformly distributed on a logarithmic scale between 1 and $10^{12}$.
\Cref{fig:experiment3} illustrates the results under the same experimental conditions as the previous experiments.
Based on our earlier interpretations, these curves adhere to the convergence pattern of $I$-GMRES, supplemented by small decreases at each iteration. 
Given that the eigenvalues are equally spaced on a logarithmic scale, the resulting jumps are of similar magnitude. 
Furthermore, since all eigenvalues are distinct, these small jumps manifest at each iteration.

\begin{figure}
  \begin{center}
  \begin{subfigure}[b]{0.49\textwidth} 
    \begin{tikzpicture}[scale=0.7]
      \begin{semilogyaxis}[
          width=6.8cm,
          height=6cm,
          ymajorgrids,
          xlabel={Iterations},
          ylabel={residual},
          xmin=0, xmax=20,
          legend cell align=left,
          legend style={at={(0.04,0.05)}, anchor=south west}
        ]
        \addplot [color=black, mark=*, mark size=1.5pt] table [x expr=\coordindex, y index=0]{data/rW__stagnate__evenly_spaced.txt};
        \addlegendentry{$I$-GMRES}
        \foreach \i in {1,...,5} {
            \addplot+ [no markers] table [x expr=\coordindex, y index=\i] {data/rW__stagnate__evenly_spaced.txt};
        }
        \addlegendentry{$M$-GMRES}
      \end{semilogyaxis}
      \end{tikzpicture}
      \caption{$I$-GMRES: complete stagnation}
        \label{fig:experiment3:stagnation}
    \end{subfigure}
    \hfill
    \begin{subfigure}[b]{0.49\textwidth} 
        \centering
        \begin{tikzpicture}[scale=0.7]
      \begin{semilogyaxis}[
          width=6.8cm,
          height=6cm,
          ymajorgrids,
          xlabel={Iterations},
          ylabel={residual},
          xmin=0, xmax=20,
          legend cell align=left,
          legend pos=outer north east,
        ]
        \addplot [color=black, mark=*, mark size=1.5pt] table [x expr=\coordindex, y index=0] {data/rW__linear_decay__evenly_spaced.txt};
        \foreach \i in {1,...,5} {
            \addplot+ [no markers] table [x expr=\coordindex, y index=\i] {data/rW__linear_decay__evenly_spaced.txt};
        }
      \end{semilogyaxis}
      \end{tikzpicture}
      \caption{$I$-GMRES: linear decay in log scale}
        \label{fig:experiment3:linear}
    \end{subfigure}
    \vskip\baselineskip
    \begin{subfigure}[b]{0.49\textwidth} 
        \centering
        \begin{tikzpicture}[scale=0.7]
      \begin{semilogyaxis}[
          width=6.8cm,
          height=6cm,
          ymajorgrids,
          xlabel={Iterations},
          ylabel={residual},
          xmin=0, xmax=20,
          legend cell align=left,
          legend pos=outer north east,
        ]
        \addplot [color=black, mark=*, mark size=1.5pt] table [x expr=\coordindex, y index=0] {data/rW__irregular__evenly_spaced.txt};
        \foreach \i in {1,...,5} {
            \addplot+ [no markers] table [x expr=\coordindex, y index=\i] {data/rW__irregular__evenly_spaced.txt};
        }
      \end{semilogyaxis}
      \end{tikzpicture}
      \caption{$I$-GMRES: general irregular decrease}
        \label{fig:experiment3:irregular}
    \end{subfigure}
    \caption{Experiment 3. Convergence curves of $M$-GMRES for a sample of random $T^{-1}$ matrices. The spectrum of $M$ is uniformly distributed on a logarithmic scale between 1 and $10^{12}$. \cyan{In the plots, the convergence curve of $I$-GMRES is represented in black with dot markers, while all other lines (colored, without markers) represent those of various instances of $M$-GMRES.}}
    \label{fig:experiment3}
  \end{center}
\end{figure}

\paragraph{Experiment 4 (diagonal $T^{-1}$)}

While the previous experiments explored fully populated triangular matrices $T^{-1}$, we now consider diagonal matrices.
The experimental setup is the following: the convergence curve of $I$-GMRES is set to a linear decay in logarithmic scale, and $T^{-1}$ is the diagonal matrix $\operatorname{diag}(\sqrt{\mu})$, \emph{i.e.}\ with coefficients $(\sqrt{\mu_i})_i$.
Several settings for $\mu$ are tried out, all based on two eigenvalues, 1 and $10^{12}$, in various orders:
\begin{align*}
    \underline{\mu}^k &:= [\underbrace{1, \dots, 1}_{k}, \underbrace{10^{12}, \dots, 10^{12}}_{n/2}, \underbrace{1, \dots, 1}_{n/2-k}]^* &\forall k=0,\dots,n/2, \\
    \overline{\mu}^k &:= [\underbrace{10^{12}, \dots, 10^{12}}_{k}, \underbrace{1, \dots, 1}_{n/2}, \underbrace{10^{12}, \dots, 10^{12}}_{n/2-k}]^* &\forall k=0,\dots,n/2.
\end{align*}
The results are displayed in \Cref{fig:experiment4}. 
We observe that $M$-GMRES with $\mu := \underline{\mu}^k$ stagnates during $k$ iterations, follows the residual decrease of $I$-GMRES during $n/2$ iterations, then jumps and \cyan{follows} $I$-GMRES for the remaining iterations.
On the other hand, with $\mu := \overline{\mu}^k$: we notice that $\overline{\mu}^0 = \underline{\mu}^{n/2}$ and $\overline{\mu}^{n/2} = \underline{\mu}^0$ (in red); for all the other values of $k$, the convergence of $M$-GMRES follows that of $I$-GMRES during $(k-1)$ iterations, then jumps at the $k$-th iteration, and finally stagnates.

\begin{figure}
  \begin{center}
  \begin{subfigure}[b]{0.49\textwidth} 
    \begin{tikzpicture}[scale=0.8]
      \begin{semilogyaxis}[
          width=6.8cm,
          height=6cm,
          ymajorgrids,
          xlabel={Iterations},
          ylabel={residual},
          xmin=0, xmax=20,
          legend cell align=left,
          legend style={at={(0.04,0.05)}, anchor=south west}
        ]
        \addplot [color=black, mark=*, mark size=1.5pt] table [x expr=\coordindex, y index=0]{data/experiment4_1.txt};
        \addlegendentry{$I$-GMRES}
        \foreach \i in {1,...,11} {
            \addplot+ [no markers] table [x expr=\coordindex, y index=\i] {data/experiment4_1.txt};
        }
        \addlegendentry{$M$-GMRES}
      \end{semilogyaxis}
      \end{tikzpicture}
      \caption{$\mu := \underline{\mu}^k$ for $k=0,\dots,n/2$}
        \label{fig:experiment4:1}
    \end{subfigure}
    \hfill
    \begin{subfigure}[b]{0.49\textwidth} 
        \centering
        \begin{tikzpicture}[scale=0.8]
      \begin{semilogyaxis}[
          width=6.8cm,
          height=6cm,
          ymajorgrids,
          xlabel={Iterations},
          ylabel={residual},
          xmin=0, xmax=20,
          legend cell align=left,
          legend pos=outer north east,
        ]
        \addplot [color=black, mark=*, mark size=1.5pt] table [x expr=\coordindex, y index=0]{data/experiment4_2.txt};
        \foreach \i in {1,...,11} {
            \addplot+ [no markers] table [x expr=\coordindex, y index=\i] {data/experiment4_2.txt};
        }
      \end{semilogyaxis}
      \end{tikzpicture}
      \caption{$\mu := \overline{\mu}^k$ for $k=0,\dots,n/2$}
        \label{fig:experiment4:2}
    \end{subfigure}
    \caption{Experiment 4. \cyan{In the plots, the convergence curve of $I$-GMRES is represented in black with dot markers, while all other lines (colored, without markers) represent those of various instances of $M$-GMRES.}}
    \label{fig:experiment4}
  \end{center}
\end{figure}

\subsection{Can we break the observed behaviour?}


In the first three experiments, all generated samples exhibited the same convergence pattern.
In the following experiments, we demonstrate that this behavior can, in fact, be disrupted by setting up an experiment for which we can compute the residuals of weighted and unweighted GMRES as follows.
\paragraph{Experiment 5}
\begin{enumerate}
    \item Set $\mu \in(\nonNegativeReals\setminus\{0\})^n$. Let $M_0 := Q \operatorname{diag}(\mu) Q^*$, where $\operatorname{diag}(\mu)$ is the diagonal matrix corresponding to $\mu$, and $Q$ is a random unitary matrix. This way, $M_0$ is an hpd matrix with eigenvalues held in $\mu$. 
    \item Let $b := \frac{1}{\sqrt{n}}Q [1, \dots, 1]^\top$ be the right-hand side.
    \item Normalize $M_0$ as $M := \beta M_0$, where $\beta := \|b\|_{M_0}^{-2}$ (so as to enforce $\|b\|_M = 1$). This is our weight matrix.
    \item  We consider a matrix $A$ for which $W := Q$ is a nested orthonormal basis of $A\kry_n(A,b)$ and compare $I$-GMRES$(A,b)$ to $M$-GMRES$(A,b)$. The residuals do not depend on the particular choice of $A$ (by \eqref{eq:ip_b_wj}). There are two ways of computing the resulting residual norms:
 \begin{itemize}
    \item Either, the matrix $A$ can be constructed following the procedure in the proof of \cite[Th.\ 3]{duintjer_tebbens_prescribing_2014} ({a result that extends \cite[Th.\ 2.1]{greenbaum_any_1996} recalled in \Cref{th:existenceAb}}). Then $I$-GMRES$(A,b)$ and $M$-GMRES$(A,b)$ are run and the corresponding residual norms collected.
\item Or, the residuals can be computed directly. By \eqref{eq:ip_b_wj}, denoting by $w_i$ the $i$-th column of $W$, $\sqrt{\|r_{i-1}\|_I^2 - \|r_i\|_I^2} = |\langle b,w_i \rangle_I | = \frac{1}{\sqrt{n}}$ and $\sqrt{\|r_{i-1}\|_M^2 - \|r_i\|_M^2} = \abs{\langle b, \sqrt{\frac{{\mu}_i}{\beta}}w_i \rangle_M} = \sqrt{\frac{\mu_i}{n \beta}}$ because the scaled vectors $(\sqrt{\frac{{\mu}_i}{\beta}} w_i)_i$ form an $M$-orthonormal nested basis of  $A\kry_n(A,b)$.
    \end{itemize}
\end{enumerate}

The convergence curves are shown in \Cref{fig:experiment5}. With $n = 60, \epsilon = 10^{-2}$ and $p=n/2$, three cases are considered for $\mu$:
\[
    \mu^1 := [\underbrace{1, \dots, 1}_{p}, \underbrace{\epsilon, \dots, \epsilon}_{n-p}]^\top, \text{ for which }
    \|\widetilde{r}_i\|_M = 
    \begin{cases}
        \sqrt{1 - \frac{\beta}{n}i} &\text{if } i \leq p, \\
        \sqrt{\|\widetilde{r}_{p}\|_M^2 - \frac{\beta \epsilon}{n}(i-p)} & \text{if } i > p,
    \end{cases}
\]
\[ 
    \mu^2 := [\underbrace{\epsilon, \dots, \epsilon}_{p}, \underbrace{1, \dots, 1}_{n-p}]^\top, \text{ for which }
    \|\widetilde{r}_i\|_M = 
    \begin{cases}
        \sqrt{1 - \frac{\beta\epsilon}{n}i} \quad \approx 1 &\text{if } i \leq p, \\
        \sqrt{\|\widetilde{r}_{p}\|_M^2 - \frac{\beta}{n}(i-p)} & \text{if } i > p,
    \end{cases}
    \]
and $\mu^3$, where the eigenvalues have been shuffled.

\begin{figure}
  \begin{center}
    \begin{tikzpicture}[scale=0.9]
      \begin{semilogyaxis}[
          width=7.8cm,
          height=7cm,
          ymajorgrids,
          xlabel={Iterations},
          ylabel={residual},
          xmin=0, xmax=60,
          legend cell align=left,
          legend style={at={(0.04,0.03)}, anchor=south west}
        ]
        \addplot [no markers, color=black, line width=1.2pt] table [x expr=\coordindex, y index=0]{data/ipsen__two_distinct__b_evenly_distributed.txt};
        \addlegendentry{$I$-GMRES}
        \addplot+ [no markers, color=brown] table [x expr=\coordindex, y index=1] {data/ipsen__two_distinct__b_evenly_distributed.txt};
        \addlegendentry{$M$-GMRES ($\mu^1$)}
        \addplot+ [no markers, color=blue] table [x expr=\coordindex, y index=2] {data/ipsen__two_distinct__b_evenly_distributed.txt};
        \addlegendentry{$M$-GMRES ($\mu^2$)}
        \addplot+ [no markers, color=red] table [x expr=\coordindex, y index=3] {data/ipsen__two_distinct__b_evenly_distributed.txt};
        \addlegendentry{$M$-GMRES ($\mu^3$)}
      \end{semilogyaxis}
      \end{tikzpicture}
    \caption{Experiment 5}
    \label{fig:experiment5}
  \end{center}
\end{figure}
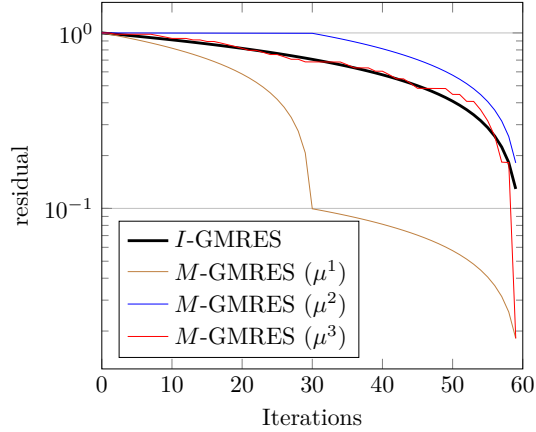

\subsection{Left and right preconditioning}

Note that the results from the previous sections can also be interpreted as instances of left and right preconditioning strategies. In this section, we consider the matrix \texttt{HB/mcfe}\footnote{\url{https://www.cise.ufl.edu/research/sparse/matrices/HB/mcfe.html}} from the University of Florida Sparse matrix collection \cite{zbMATH06721804}. 
It is a real, non-symmetric, sparse matrix of size $765\times 765$ with a condition number \red{(in $\|\cdot\|_I$)} of approximately $10^{14}$.
First, we solve the linear system using unpreconditioned GMRES for two different right-hand sides:
$b=(1, \dots, 1)^\top$ and a random $b$. 
In both cases, the relative residual remains greater than $10^{-8}$ after $n-1=764$ iterations. 
This outcome confirms the challenge posed by this problem for GMRES. 
Moving forward, we will use the random $b$.
Now, we consider two preconditioners: 
(i) ILU: the preconditioner is $H := U^{-1}L^{-1}$, where $L$ and $U$ are the factors in the incomplete LU factorization of $A$, 
(ii) Symmetric part: $H$ is the inverse of $(A+A^*)/2$. 
The corresponding convergence curves are displayed in \Cref{fig:difference_left_right}.
Using ILU as a preconditioner leads to fast convergence in both cases (less than 12 iterations) with an advantage to left preconditioning. On the other hand, using the symmetric part as a preconditioner exhibits a significant difference in convergence speed.
For this case, some metrics given in \Cref{tbl:HA_properties} do not allow us to explain this behaviour.
Note that, from these results, one might be tempted to think that left preconditioning generally offers a faster convergence. However, \Cref{th:inverse_curves} shows this is not the case.
Finally, in \Cref{fig:all_residuals}, we present plots of the preconditioned and unpreconditioned relative residual norms, along with the error.
We use solid lines to display the residual norms actually minimized by left and right preconditioned GMRES: the preconditioned residual $\|Hr\|_I/\|b\|_I$ for left preconditioning and the unpreconditioned residual $\|r\|_I/\|b\|_I$ for right preconditioning. These are the residual to which the stopping criterion respectively applies. The errors produced by these algorithms are plotted in dashed line. These errors are computed from the GMRES approximations at each iteration, compared against the algebraic solution obtained through an LU factorization. Additionally, the residual not minimized by the algorithm is plotted for reference.
In particular, while left preconditioning shows rapid convergence in terms of the preconditioned residual and error, the unpreconditioned residual (\emph{i.e.}\ in Euclidean norm) remains high. \cyan{This is a typical case where monitoring only the left preconditioned residual could lead to early termination if the user is not aware that the Euclidean norm of the residual can still be high.}

\begin{table}[h]
    \centering
	\begin{tabular}{lc}
	\toprule
	$\kappa(HA)$ & $10^7$ \\
	$\kappa(AH)$ & $10^6$ \\
	min/max singular values of $H$ & $10^{-18}$/$10^{-4}$ \\
	  $\|HA\|_I$  & 7 \\
	$\|AH\|_I$ & 7 \\
	\bottomrule
    \end{tabular}
    \caption{Properties of the preconditioned matrix, with \cyan{$H=2(A+A^*)^{-1}$}. \cyan{The condition numbers are defined by $\kappa(X) := \|X\|_I\|X^{-1}\|_I$.}}
\label{tbl:HA_properties}
\end{table}

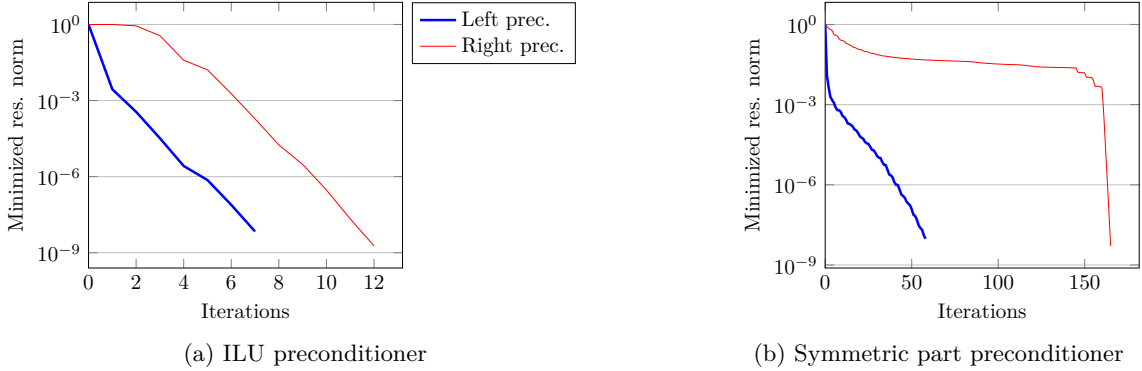
\begin{figure}
  \begin{center}
  \begin{subfigure}[b]{0.49\textwidth} 
    \begin{tikzpicture}[scale=0.8]
      \begin{semilogyaxis}[
          width=6.8cm,
          height=6cm,
          ymajorgrids,
          xlabel={Iterations},
          ylabel={Minimized res. norm},
          xmin=0, 
          legend cell align=left,
          legend pos=outer north east,
        ]
        \addplot [no markers, color=blue, line width=1.2pt] table [x expr=\coordindex, y index=0] {data/difference_left_right_ilu__L.txt};
        \addlegendentry{Left prec.}
        \addplot+ [no markers, color=red] table [x expr=\coordindex, y index=0] {data/difference_left_right_ilu__R.txt};
        \addlegendentry{Right prec.}
      \end{semilogyaxis}
      \end{tikzpicture}
      \caption{ILU preconditioner}
        \label{fig:difference_left_right:ilu}
    \end{subfigure}
    \hfill
    \begin{subfigure}[b]{0.49\textwidth} 
        \centering
        \begin{tikzpicture}[scale=0.8]
      \begin{semilogyaxis}[
          width=6.8cm,
          height=6cm,
          ymajorgrids,
          xlabel={Iterations},
          ylabel={Minimized res. norm},
          xmin=0, 
        ]
        \addplot [no markers, color=blue, line width=1.2pt] table [x expr=\coordindex, y index=0] {data/difference_left_right_sympart__L.txt};
        \addplot+ [no markers, color=red] table [x expr=\coordindex, y index=0] {data/difference_left_right_sympart__R.txt};
      \end{semilogyaxis}
      \end{tikzpicture}
      \caption{Symmetric part preconditioner}
        \label{fig:difference_left_right:sympart}
    \end{subfigure}
    \caption{Preconditioners applied on the left or on the right. The norm that is minimized by the preconditioned GMRES is displayed.}
    \label{fig:difference_left_right}
  \end{center}
\end{figure}

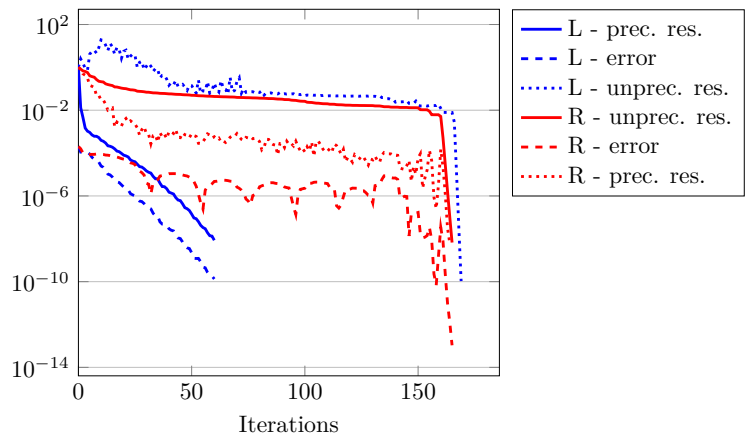
\begin{figure}
  \begin{center}
    \begin{tikzpicture}[scale=0.9]
      \begin{semilogyaxis}[
          width=7.8cm,
          height=7cm,
          ymajorgrids,
          xlabel={Iterations},
          xmin=0, 
          legend cell align=left,
          legend pos=outer north east,
        ]
        \addplot [no markers, color=blue, solid, line width=1.2pt] table [x expr=\coordindex, y index=0]{data/difference_left_right_sympart2_L_min_res.txt};
        \addlegendentry{L - prec. res.}
        \addplot+ [no markers, color=blue, dashed, line width=1.2pt] table [x expr=\coordindex, y index=0] {data/difference_left_right_sympart2_L_error.txt};
        \addlegendentry{L - error}
        \addplot+ [no markers, color=blue, dotted, line width=1.2pt] table [x expr=\coordindex, y index=0] {data/difference_left_right_sympart2_L_nonprec_res.txt};
        \addlegendentry{L - unprec. res.}
        
        \addplot+ [no markers, color=red, solid, line width=1.2pt] table [x expr=\coordindex, y index=0] {data/difference_left_right_sympart2_R_min_res.txt};
        \addlegendentry{R - unprec. res.}
        \addplot+ [no markers, color=red, dashed, line width=1.2pt] table [x expr=\coordindex, y index=0] {data/difference_left_right_sympart2_R_error.txt};
        \addlegendentry{R - error}
        \addplot+ [no markers, color=red, dotted, line width=1.2pt] table [x expr=\coordindex, y index=0] {data/difference_left_right_sympart2_R_prec_res.txt};
        \addlegendentry{R - prec. res.}
      \end{semilogyaxis}
      \end{tikzpicture}
    \caption{Residuals and errors yielded by left (L) and right (R) preconditioned GMRES.}
    \label{fig:all_residuals}
  \end{center}
\end{figure}

\section{Conclusion}

In this work, we extended the theory of prescribed convergence curves for GMRES to the setting of weighted GMRES and preconditioned GMRES. Building on the foundational result of Greenbaum, Pták, and Strakoš, we demonstrated that for any linear system and any prescribed convergence behavior, one can construct a weight matrix or a preconditioning strategy that forces GMRES to exhibit this behavior. 
Our theoretical contributions include a full characterization of weight matrices that yield a desired convergence curve and the necessary and sufficient conditions for realizing two distinct convergence curves simultaneously (one for $I$-GMRES and the other for $M$-GMRES). 
We showed that these results naturally translate to the preconditioned setting, both with split and one-sided preconditioners, highlighting in particular that left and right preconditioning can yield markedly different convergence behaviors, regardless of the spectrum of the preconditioned system. 

\section*{Acknowledgments}

We thank Gérard Meurant for enlightening discussions about left and right preconditioning and for suggesting the HB/mcfe matrix to us. 

\bibliographystyle{plainurl}
\bibliography{references}

\end{document}